\newtheorem{remark}[theorem]{Remark}
\def\RR{\mathbb{R}}
\title{Order reduction methods for solving large-scale differential matrix Riccati equations \thanks{
This version dated December 20, 2019} }
\author{Gerhard Kirsten\thanks{Dipartimento di Matematica,
Universit\`a di Bologna, Piazza di Porta S. Donato, 5, I-40127 Bologna, Italy,
({\tt gerhard.kirsten2@unibo.it})}
\and
Valeria Simoncini\thanks{Dipartimento di Matematica and ${\rm AM}^2$,
Universit\`a di Bologna, Piazza di Porta S. Donato, 5, I-40127 Bologna, Italy;
and IMATI-CNR, Pavia Italy ({\tt valeria.simoncini@unibo.it}).}}
\begin{document}

\maketitle

\begin{abstract}
We consider the numerical solution of large-scale symmetric differential matrix Riccati 
equations. Under certain hypotheses on the data, reduced order methods have recently 
arisen as a promising class of solution strategies, by forming low-rank approximations to 
the sought after solution at selected timesteps. 
We show that great computational and memory savings are obtained by a reduction
process onto rational Krylov subspaces, as opposed to current approaches. 
By specifically addressing the solution of the reduced differential equation and
reliable stopping criteria, we are able to 
obtain accurate final approximations at low memory and computational requirements.
This is obtained by employing a two-phase strategy that separately enhances the accuracy of
the algebraic approximation and the time integration. The new method allows us to numerically solve
much larger problems than  in the current literature.
 Numerical experiments on benchmark problems illustrate the effectiveness of the procedure with respect
to existing solvers. 
\end{abstract}

\begin{keywords}
Differential Matrix Riccati, Rational Krylov, Extended Krylov, Linear Quadratic Regulator, Low-rank, BDF
\end{keywords}


\section{Introduction}
We consider the solution of the continuous-time differential matrix Riccati equation (DRE in short) of the form
\begin{equation}
\dot{X}(t) = A^{T}X(t) + X(t)A - X(t)BB^{T}X(t) + C^{T}C, \quad X(0) = X_{0}, 
\label{dre}
\end{equation}
in the unknown matrix $X(t)\in \mathbb{R}^{n \times n}$, where $X_{0} = Z Z^{T}$ and $t \in [0,t_{f}]$. 
Here, $A \in \mathbb{R}^{n \times n}$, $B \in \mathbb{R}^{n \times s}$, $C \in \mathbb{R}^{p \times n}$ 
and $Z \in \mathbb{R}^{n \times q}$ are time invariant, and $s, p, q \ll n$. 
The matrix $A$ is assumed to be large, sparse and nonsingular, whereas $B$, $C$ and $Z$ have full rank. 
In particular, we consider low-rank DREs, where both matrices $C^TC$ and $X_0$ have very low rank compared
to $n$.
Even though the matrix $A$ is sparse, the solution $X(t)$ is typically 
dense and impossible to store when $n$ is large. Under the considered hypotheses, numerical evidence seems to
indicate that  $X(t)$ usually has rapidly decaying singular values, hence a low-rank 
approximation to $X(t)$ may be considered, see e.g., \cite{Stillfjord2018LR}. 
For completeness, we also refer the reader to \cite{grasedyck2004, grasedyck2004.2} for 
results on the existence of low-rank solutions for the algebraic Sylvester and Lyapunov (linear)
equations.

The DRE plays a fundamental role in optimal control theory, filter design theory, model reduction problems, as well as in
differential games \cite{AbouKandil2003,BCOW.17,Blanes2015,CORLESS2003,Reid72}.  
{Equations of the form \cref{dre} are crucial in the numerical treatment of 
  the linear quadratic regulator (LQR) problem 
\cite{AbouKandil2003, CORLESS2003, Kwakernaak1972}: given the state equation
\begin{equation} \label{state}
\dot{x}(t) = Ax(t) + Bu(t),\quad y(t)=C x(t), \qquad x(0) = x_{0}
\end{equation}
consider the finite horizon case, where the finite time cost integral has the form
\begin{equation}
J(u) = x(t_{f})^{T}P_{f}x(t_{f}) + \int_{0}^{t_{f}} \left( x(t)^{T}C^{T}Cx(t) + u(t)^{T}u(t) \right) dt.
\label{cost}
\end{equation} 
The matrix $P_{f}$ is assumed to be symmetric and nonnegative definite. 
Assuming that the pair $(A,B)$ is stabilizable and the pair $(C,A)$ is detectable, 
the optimal input $\tilde{u}(t)$, minimizing \cref{cost}, can be determined as
$\tilde{u}(t) = -B^{T}P(t) \tilde{x}(t)$,
and the optimal trajectory is subject to $\dot{\tilde{x}} = (A - BB^{T}P(t))x(t)$. The matrix
$P(t)$ is the solution to the DRE  
\begin{equation}
\dot{P}(t) = A^{T}P(t) + P(t)A - P(t)BB^{T}P(t) + C^{T}C, \quad P(t_{f}) = P_{f}.
\label{drefinals}
\end{equation}
Using a common practice, we can transform \cref{drefinals} into the 
initial value problem \cref{dre} via the change of variables $X(t_{f} - t) = P(t)$.

Under certain assumptions, the exact solution of \cref{dre} can be expressed in integral form as (see e.g., \cite[Theorem 8]{Kucera1973})
\begin{equation} \label{exactsolution}
X(t) = e^{tA^{T}}ZZ^{T}e^{tA} + \int_{0}^{t} e^{(t-s)A^{T}}\left (C^{T}C- X(s)BB^{T}X(s)\right ) e^{(t-s)A} ds  ,
\end{equation} 
so that when $t \rightarrow \infty$ the DRE reaches a steady state solution satisfying the algebraic Riccati equation (ARE)
\begin{equation}
\label{are}
0 = A^{T}X_{\infty} + X_{\infty}A - X_{\infty}BB^{T}X_{\infty} + C^{T}C .
\end{equation}
In the framework of differential equations, the DRE is characterized by
 both fast and slow varying modes, hence it is classified as a stiff ordinary differential equation (ODE). 
The stiffness and the nonlinearity of the DRE are responsible for the difficulties in
its numerical solution even on a small scale ($n < 10^{3}$). 
Several stiff integrators have been investigated, including the matrix generalizations of 
implicit ODE solvers \cite{Dieci1992, Choi1990a}, linearization methods \cite{Davison1973} 
and more recently matrix versions of 
splitting methods \cite{Mena2018, Stillfjord2015, Stillfjord2018}. These methods are
feasible on a small scale but fail to be efficient when $n$ is large. 
In \cite{Saak2016}, iterative methods are implemented within the 
matrix generalization of standard implicit methods allowing for the computation of an approximate 
solution to the DRE when $n \gg 10^{3}$. These algorithms 
require the solution of a large {\it algebraic} Riccati equation at each timestep, which 
again raises big concerns as of storage and computational efforts.

A promising idea is to rely on a model order reduction strategy typically used
in linear and nonlinear dynamical systems. In this setting, the 
original system is replaced with
\begin{equation} \label{state_red}
\dot{\widehat x}(t) = A_m \widehat x(t) + B_m u(t),\quad y(t)=C_m \widehat x(t), \qquad \widehat x(0) = \widehat x_{0}
\end{equation}
where $A_m, B_m$ and $C_m$ are projections and restrictions of the original matrices
onto a subspace of small dimension. The differential Riccati equation associated
with this reduced order problem is solved, yielding an optimal corresponding cost function.
This strategy allows for a natural low-rank approximation to the sought after DRE solution $X(t)$,
obtained by interpolating the reduced order solution at selected time instances.
One main feature is that a single space is used for all time snapshots, so that
the approximate solutions can be kept in factored form with few memory allocations.
We refer the reader to, e.g., \cite{Antoulas2005} for a general presentation of
algebraic reduction methods for linear dynamical systems, and
to \cite{Simoncini2016} for a detailed discussion motivating the reduction approach in the
context of the algebraic Riccati equation.

A key ingredient for the success of the reduction methodology
is the choice of the approximation space onto which the algebraic reduction is performed; 
\cite{Antoulas2005} presents a comprehensive description of various space selections
in the dynamical system setting.
Following strategies already successfully adopted for the algebraic Riccati equations, 
the authors of \cite{Koskela2018} and \cite{Guldogan2017} have independently used
polynomial and extended Krylov subspaces as approximation space, respectively, in the
differential setting.
A major characteristic of these spaces is that their dimension can be expanded iteratively, so that
if the determined approximate solution is not sufficiently accurate, the Krylov space can be
enlarged and the process continued.
Several questions remain open in the methods proposed in \cite{Koskela2018},\cite{Guldogan2017}.
On the one hand, it is well known that polynomial Krylov subspaces require a very large dimension
to satisfactorily solve real application problems, thus destroying the reduction advantages.
On the other hand, the multiple timestepping proposed in the method in \cite{Guldogan2017} 
only provides an accurate approximation at $t = t_{f}$, except 
when $X_0=0$. For $X_0=ZZ^T \ne 0$ of low rank, memory requirements of the extended method grow significantly.
These problems can be satisfactorily solved by using a general {\it rational} Krylov subspace, which
is shown in various applications
to be able to supply good spectral information on the involved matrices with much smaller dimensions
than the polynomial and extended versions. Such gain has been experimentally reported in
the literature in the solution of the {\it algebraic} Riccati equation.
We show that great computational and memory savings can be obtained when projecting 
onto the fully rational Krylov subspace, and that with an appropriate implementation the extended 
Krylov subspace may also be competitive with certain data.

A related issue that has somehow been overlooked in the available literature is 
the expected final accuracy and thus the
stopping criterion. Time dependence of the DRE makes the reduced problem trickier to handle than
in the purely algebraic case; in particular, two intertwined issues arise: i) The accuracy
of the approximate solution may vary considerably within the time interval $[0,t_f]$;
ii) Throughout the reduction process
the reduced ODE cannot be solved with high accuracy and, quite the opposite, low-order methods
should be used to make the overall cost feasible.
We analyze these difficulties in detail, and by exploiting the inherent structure of
the reduced order model, we derive a two-phase strategy that first focuses on the reduction
and then on the integration, in a way that is efficient for memory and CPU time usage,
but also in terms of final expected accuracy.

We also discuss several algebraic properties of the approximate solution and its relation
both with the solution $X(t)$ for $t\in [0,t_f]$, and with the steady state solution $X_\infty$.
These results continue a matrix analysis started in \cite{Koskela2018}, where positivity and
monotonicity properties of the approximate solution obtained by certain reduction methods are
explored.

{The paper is organized as follows. In \cref{sec:projection} we introduce reduction methods
and discuss the use of Krylov subspace based strategies. 
Matrix-oriented BDF methods are recalled for the solution of the projected 
problem in \cref{sec:internal}. In \cref{sec:krylovdre} we devise a stopping criterion for 
the order reduction methods and illustrate its key role in the implementation.
Section~\ref{sec:stab} is devoted to the analysis of matrix properties of the solution, as 
well as the reduced model, from a control theory perspective. Several 
numerical experiments are reported in \cref{sec:main}, where the new methods are also
compared with state-of-the-art procedures. Our conclusions are discussed in \cref{sec:conclusion}. 
Finally, in \cref{matral} and \cref{appb} we review some properties of the extended and rational Krylov subspaces.}

{\it Notation and definitions.} Throughout the paper, the matrix $I_{n}$ will denote the $n \times n$ identity matrix. In terms of norms, $\|\cdot\|$ refers to any induced matrix norm, where in particular the Frobenius norm is denoted by $\|\cdot\|_{F}$. A matrix $A$ is stable (sometimes also called Hurwitz)} if all its {\em eigenvalues} are contained in the left half open complex plane. A linear dynamical system,
$
\dot{x} = Ax,
$
is called dissipative if the real matrix $A$ has its {\em field of values} contained in the left half open complex plane.

All reported experiments were performed using MATLAB 9.4 (R2018b) 
(\cite{matlab2013}) on a MacBook Pro with 8-GB memory and a 2.3-GHz Intel core i5 processor.

\section{Order reduction with Krylov-based subspaces} \label{sec:projection}
In this section, we review Krylov-based order reduction methods and show how they are applied to the DRE. 
Krylov subspaces that have been explored in the past years have the form
{
\begin{eqnarray*}
\mathcal{K}_{m}(A, N) &=& \mbox{range}\left\{[N, AN, A^{2}N,\dots,A^{m-1}N]\right\} \qquad {\rm polynomial} \\
\mathcal{EK}_{m}(A, N) &=& \mathcal{K}_{m}(A, N) + \mathcal{K}_{m}(A^{-1}, A^{-1}N) \qquad {\rm extended} \\
\mathcal{RK}_{m}(A, N, {\pmb s}) &=& \mbox{range}\left\{ [N, (A-s_2 I)^{-1}N, 
\dots,\prod_{i=2}^m (A-s_i I)^{-1}N]\right\} \qquad {\rm rational}.
\end{eqnarray*}
}
where $N$ is a tall matrix associated with the given problem. In the rational subspace,
${\pmb s}=\{s_2, \ldots, s_m\}$ is a set of properly chosen
real or complex shifts, whose computation can be performed a priori or dynamically
during the generation of the subspace; we refer the reader to \cite{Simoncini2016a,Druskin2011} for
more complete descriptions.

Krylov-based projection methods (in short generically denoted as ${\cal K}_m$)
were first applied to ARE's in \cite{Jaimoukha1994} (polynomial spaces)
 and later improved in
\cite{Heyouni2009a} (extended space) and \cite{Simoncini2014} (rational spaces).
The two rational spaces prove to
be far superior to the polynomial Krylov space in most reduction strategies where they are applied in the
literature, 
as long as solving linear systems at each iteration is feasible.
 The differential 
Riccati equation has been attacked in \cite{Guldogan2017} with the extended space,
and in \cite{Koskela2018} with the polynomial space; here we close the gap, as far as
Krylov subspaces are concerned. In addition, we address several implementation issues
to make the final method computationally reliable and, to the best of our knowledge,
a great competitor among the available methods for large-scale DRE problems.

While for the algebraic Riccati equation $N=C^T$, in the differential context
the starting matrix for generating these spaces is given by $N=[C^T,Z]$, where $X_0=ZZ^T$.
Both matrices $C$ and $Z$ play a crucial role in the closed-form DRE solution matrix and are thus included
to generate the projection space.
The idea of reduction methods is to 
first project the large DRE onto the smaller subspace $\mathcal{K}_{m}$, 
then solve the projected equation, and finally expand the solution back to the original space. 

Let the columns of $\mathcal{V}_{m} \in \mathbb{R}^{n \times d}$ span the considered Krylov subspace. Then the following
Arnoldi-type relation holds,
\begin{equation} \label{genericrelation}
A^{T}\mathcal{V}_{m}  = \mathcal{V}_{m}\mathcal{T}_m^{T} +  \nu_{m+1}\tau_{m}^{T},
\end{equation}
where the actual values of  $\nu_{m+1}\in\RR^n$ and $\tau_{m}^{T}$ depend on the chosen subspace. Moreover,
setting ${\cal V}_{m+1}=[\mathcal{V}_{m}, \nu_{m+1}]$ we have
that $\mathcal{K}_{m+1} = {\rm range}({\cal V}_{m+1})$, which 
shows that Krylov subspaces are nested, that is $\mathcal{K}_{m} \subseteq \mathcal{K}_{m+1}$,
 resulting in a dimension increase after each iteration. Matrix relations leading to \cref{genericrelation}
for the extended and rational Krylov subspaces are recalled in \cref{matral}. 

Assume that ${\cal V}_m$ has orthonormal columns.
Following similar reduction methods in the dynamical system contexts, see, e.g., 
\cite{Antoulas2005}, the reduction process consists of first projecting and restricting the original data
onto the approximation space as
\begin{equation*}
\mathcal{T}_{m} = \mathcal{V}_{m}^{T}A\mathcal{V}_{m}, 
\quad B_{m} = \mathcal{V}_{m}^{T}B, \quad Z_{m} = \mathcal{V}_{m}^{T}Z \quad \mbox{and} \quad C_{m} = C\mathcal{V}_{m}.
\end{equation*}
Then the following low order differential Riccati equation needs to be solved,
\begin{equation} \label{projdre}
\begin{split}
\dot{Y}_{m}(t) &= \mathcal{T}_{m}^{T}Y_{m}(t) + Y_{m}(t)\mathcal{T}_{m} - Y_{m}(t)B_{m}B_{m}^{T}Y_{m}(t) + C_{m}^{T}C_{m} \\
Y_{m}(0) &= Z_{m}Z_{m}^{T},
\end{split}
\end{equation}
for $t\in [0, t_f]$. This low-dimensional DRE admits
a unique solution for $t_f < \infty$~, see e.g., \cite{Kucera1973}. 
Restrictions on the data to allow for positive, stabilizing solutions are
 discussed in more detail in section~\ref{sec:algstab}.
An approximation to the sought after solution is then written as
\begin{equation}
\label{projection}
X_{m}(t) = \mathcal{V}_{m}Y_{m}(t)\mathcal{V}_{m}^{T} \approx X(t), \qquad t\in [0, t_f].
\end{equation}
We stress that $X_{m}(t)$ is never explicitly computed,
but always referred to via the matrix $\mathcal{V}_{m}$ and the set of matrices $Y_{m}(t)$ at
given time instances. In fact, the matrices $Y_{m}(t)$ may also be numerically low rank, so that
at the end of the whole process a further reduction can be performed by truncating the eigendecomposition
of $Y_{m}(t)$ for each $t$.

\vskip 0.1in
\begin{remark}
The approach we have derived is solely based on the order reduction of the dynamical system (\ref{state}).
Nonetheless, and with some abuse of notation, the reduced DRE could have been formally obtained by
means of a Galerkin condition on the differential equation.
For $t\in [0, t_f]$ let
\begin{equation*} 
\mathcal{R}_{m}\left(t\right) := \dot{X}_{m}(t) - A^{T}X_{m}(t) - X_{m}(t)A + X_{m}(t)BB^{T}X_{m}(t) - C^{T}C
\end{equation*}
be the residual matrix for $X_m(t) = {\cal V}_m Y_m {\cal V}_m^T$. 
The matrix $Y_{m}(t)$ is thus determined by imposing that the residual
satisfies the following {\it Galerkin} condition 
\begin{equation}
\mathcal{V}_{m}^{T}\mathcal{R}_{m}\left(t\right) \mathcal{V}_{m} = 0, \quad t\in [0, t_f],
\end{equation}
 that is, $\mathcal{R}_{m}(t) \perp \mathcal{K}_{m}$ in a matrix sense, so that the residual is 
forced to belong to a smaller and smaller subspace as $\mathcal{K}_{m}$ grows. 
Substituting $X_{m}(t) = \mathcal{V}_{m}Y_{m}(t)\mathcal{V}_{m}^{T}$ into the residual matrix, the application of 
the Galerkin condition results in the projected system 
  
{\footnotesize $$
\mathcal{V}_{m}^T\big( \mathcal{V}_m\dot{Y}_m(t)\mathcal{V}_m^T
- A^{T}\mathcal{V}_m Y_m(t)\mathcal{V}_{m}^{T} - \mathcal{V}_{m}Y_{m}(t)\mathcal{V}_m^T A
+ \mathcal{V}_m Y_m(t)\mathcal{V}_m^T BB^T\mathcal{V}_m Y_m(t)\mathcal{V}_m^T - C^T C)\mathcal{V}_{m} = 0  ,
$$}
%
$\!\!$which corresponds to (\ref{projdre}).
This is rigorous as long as ${\dot X}_m = {\cal V}_m{\dot Y}_m{\cal V}_m^T$ holds. $\,\, \square$
\end{remark}

\vskip 0.1in
It is crucial to realize that, as opposed to some available methods in the literature (such as, for 
instance, \cite{Saak2016},\cite{Stillfjord2018} and the time-invariant algorithms in \cite{Lang2017}),
the approximation space is independent of the time stepping, that is
a single space range$(\mathcal{V}_{m})$ is used for all time steps. This provides enormous memory savings
whenever the approximate solution is required at different time instances in $[0, t_f]$. Theoretical motivation for keeping the approximation space independent of the time-stepping is contained in \cite{BEHR.18}, where it is shown that the solution of the DRE lives in an invariant 
Krylov-subspace\footnote{ We also refer the reader to the recent manuscript \cite{BEHR.19.2}, which appeared on-line briefly before the first round of our revision.}.

The class of numerical methods we used
for solving the reduced DRE is described in the next section.
In the rest of this paper, we specialize the generic derivation above
to the extended and rational Krylov subspaces, which greatly outperformed polynomial spaces
both in terms of CPU time and memory requirements. More information on these spaces and their
properties are given in \cref{matral}; in particular, we discuss the generation of a real
rational Krylov basis in the presence of non-real shifts.

\section{BDF methods for the DRE} \label{sec:internal}
{The numerical solution of the small-scale DRE is a well-studied topic, see, e.g., 
\cite{Mena2007,Davison1973,Mena2018,Stillfjord2015}. Among the explored methods
are matrix generalizations of the BDF methods \cite{Mena2007, Dieci1992}, which 
are computationally appealing only for small problems. Due to the reduction strength of
rational Krylov subspaces, we expect the reduced DRE in \cref{projdre} to be
small enough to allow for efficient use of matrix-based BDF methods, which we are going to summarize
next.
For simplicity of exposition, in the rest of this section we omit the subscript in $Y_{m}$, 
and denote $Y^{(k+1)} = Y(t_{k+1})$.} If we define
\begin{equation}
\label{drerhs}
\mathbf{F}\left(Y^{(k+1)}\right) = \mathcal{T}_{m}^{T}Y^{(k+1)} + Y^{(k+1)}\mathcal{T}_{m} - Y^{(k+1)} B_{m}B_{m}^{T}Y^{(k+1)} + C_{m}^{T}C_{m},
\end{equation}
the approximation of $Y^{(k+1)}$ is given by the implicit relation
\begin{equation}
\label{eq:bdf}
Y^{(k+1)} = \sum_{i = 0}^{b-1}\alpha_{i}Y^{(k-i)} + h\beta\mathbf{F}(Y^{(k+1)}),
\end{equation}
where $h = t_{k+1} - t_{k}$ {is the stepsize and the respective $\alpha_{i}$'s and 
$\beta$ 
are the coefficients of the $b$-step BDF method for $b \leq 3$ and are given below.}
\vskip 0.1in
{\footnotesize
\begin{center}
\begin{tabular}{|c|c|c|c|c|} \hline
$p$ & $\beta$ & $\alpha_{0}$ & $\alpha_{1}$ & $\alpha_{2}$ \\ \hline
1 & 1 & 1 &  &  \\
2 & $2/3$ & $4/3$ & $-1/3$ & \\ 
3 & $6/11$ & $18/11$ & $-9/11$ & $2/11$ \\ \hline
\end{tabular}
\end{center}
}
\vskip 0.1in
%

Substituting \cref{drerhs} into \cref{eq:bdf} results in the following nonlinear matrix equation
{\footnotesize
\begin{equation*}
-Y^{(k+1)} + h\beta\left (\mathcal{T}_{m}^{T}Y^{(k+1)} + Y^{(k+1)}\mathcal{T}_{m} 
- Y^{(k+1)}B_{m}B_{m}^{T}Y^{(k+1)} + C_{m}^{T}C_{m}\right ) + \sum_{i=0}^{b-1}\alpha_{i}Y^{(k-i)} = 0,
\end{equation*}
}
which can be reformulated as the following continuous-time ARE
\begin{equation}
\label{ARE2}
\widehat{\mathcal{T}}_{m}^{T}Y^{(k+1)} + Y^{(k+1)}\widehat{\mathcal{T}}_{m} - Y^{(k+1)}\widehat{B}_{m}\widehat{B}_{m}^{T}Y^{(k+1)} + \widehat{Q}_{m} = 0.
\end{equation}
The coefficient matrices are given by
\begin{equation*}
\widehat{\mathcal{T}}_{m} = h\beta \mathcal{T}_{m} - \frac{1}{2}I_{m}, \quad 
\widehat{B}_{m} = \sqrt{h\beta} B_{m}, \quad
\widehat{Q}_{m} = h\beta C_{m}^{T}C_{m} + \sum_{i=0}^{b-1}\alpha_{i}Y^{(k-i)}.
\end{equation*}
The Riccati equation \cref{ARE2} can be solved using ``direct'' methods; see, e.g., \cite{Bini2012}.
In our experiments we used the MATLAB  solver \texttt{care} from the control systems toolbox. 
A brief sketch of the $b$-step BDF method is reported in \cref{alg:bdf}; other approaches
are discussed, e.g., in \cite{Mena2007, Lang2017}.
\begin{algorithm}

\caption{$b$-step BDF method -- BDF($b,\ell$)}
\label{alg:bdf}
\begin{algorithmic}[1]
\REQUIRE{$\mathcal{T}_{m} \in \mathbb{R}^{d \times d}$, $B_{m} \in \mathbb{R}^{d \times s}$, 
$C_{m} \in \mathbb{R}^{p \times d}$, $Z_{m} \in \mathbb{R}^{d \times q}$, 
final time $t_{f}$, number of timesteps $\ell$, initial approximations $Y^{(0)},\dots,Y^{(b-1)}$}.
\STATE{$h=t_f/\ell$, $\widehat{\mathcal{T}}_{m} = h\beta \mathcal{T}_{m} - \frac{1}{2}I_{m}$, 
$\, \widehat{B}_{m} = \sqrt{h\beta} B_{m}$}
\FOR{$ k = 0 $ \TO $\ell$}
\STATE{$\widehat{Q}_{m} = h\beta C_{m}^{T}C_{m} + \sum_{i=0}^{b-1}\alpha_{i}Y^{(k-i)}$}
\STATE{Solve $\widehat{\mathcal{T}}_{m}^{T}Y^{(k+1)} + Y^{(k+1)}\widehat{\mathcal{T}}_{m} - Y^{(k+1)}\widehat{B}_{m}\widehat{B}_{m}^{T}Y^{(k+1)} + \widehat{Q}_{m} = 0$}
\ENDFOR
\RETURN $Y^{(k)} \approx Y(t_k)$, $t_k=0, h, \ldots, t_f$
\end{algorithmic}

\end{algorithm}

We conclude the section by depicting the typical convergence behavior of 
the BDF methods in our context.
We consider an example from \cite{Mena2018}, where the $n\times n$ matrix $A$ 
stems from the spatial finite difference discretization of the following advection-diffusion equation 
\begin{equation*}
\partial_{t}w = \Delta w - 10x w_x-100y w_y, \qquad {w|_{\partial \Omega} =0}
\end{equation*}
on $\Omega = (0,1)^{2}$ with homogeneous Dirichlet boundary conditions. The choices of $B \in \mathbb{R}^{n \times 1}$ and $C \in \mathbb{R}^{1 \times n}$ are given binomially as described in \cite{Mena2018}. 
The initial condition is taken to be the zero matrix, that is $Z = \mathbf{0}_{n \times 1}$. 
We compare the obtained solution with a ``reference'' numerical solution $Y_{ref}(t)$ computed
by an accurate but expensive method (the MATLAB function \texttt{ode23s} in our experiments), so that
$n$ is kept small, $n=49$.
The convergence behavior for $b = 1,2,3$ and $\ell$ timesteps, with $\ell = 10,100,1000$ is displayed in \cref{bdfs}.
The left plot shows the error $\|Y(t)-Y_{ref}(t)\|$ as a function of $t$, for different values
of $\ell$. The right plot shows the evolution of the (1,1) component of the solution throughout
the time span for the most accurate choice of BDF method, compared with that of the reference solution.
\begin{figure}[htb!]    
\begin{center}   
    \includegraphics[width=0.47\textwidth]{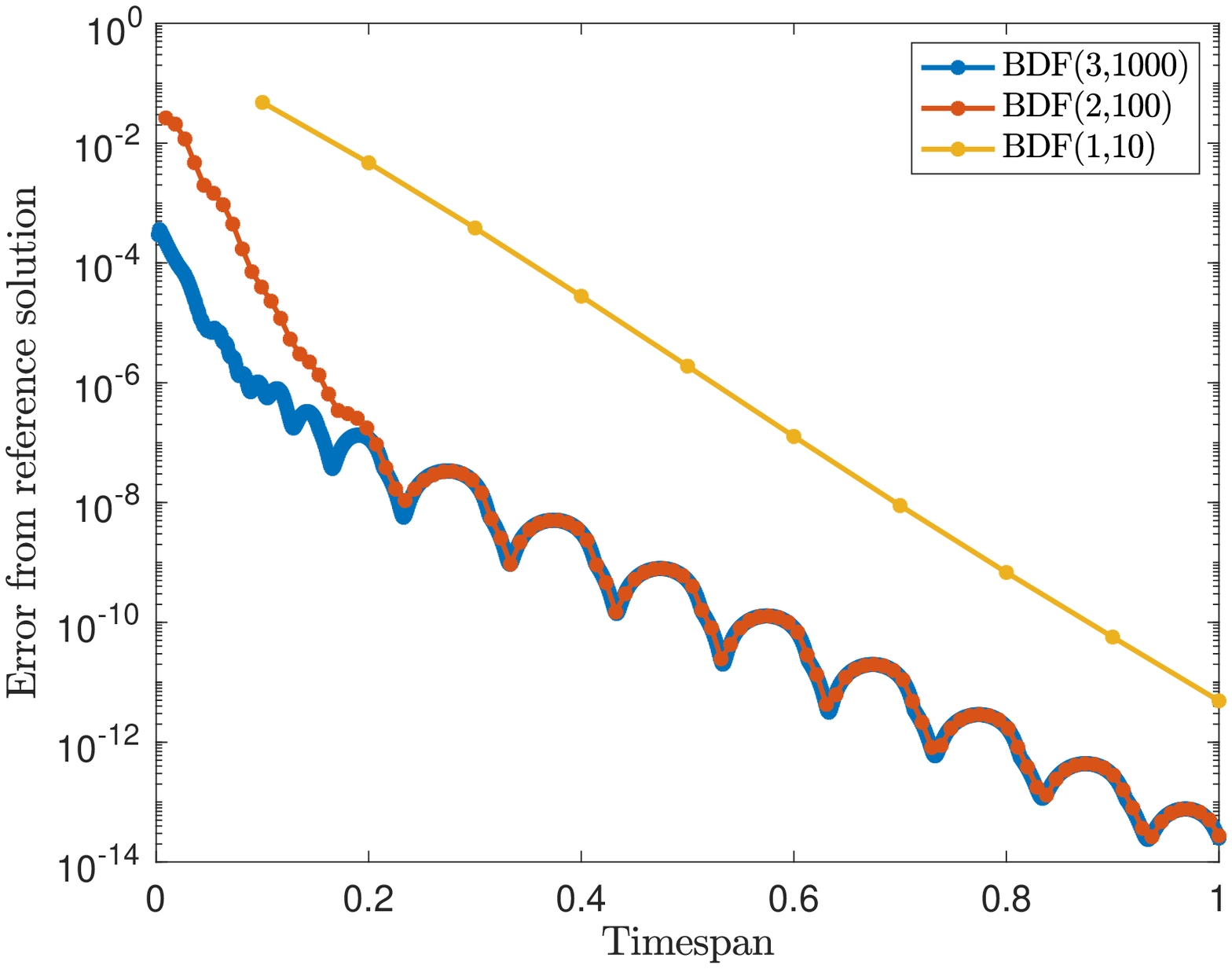}
    \hspace{0.1cm}
    \includegraphics[width=0.47\textwidth]{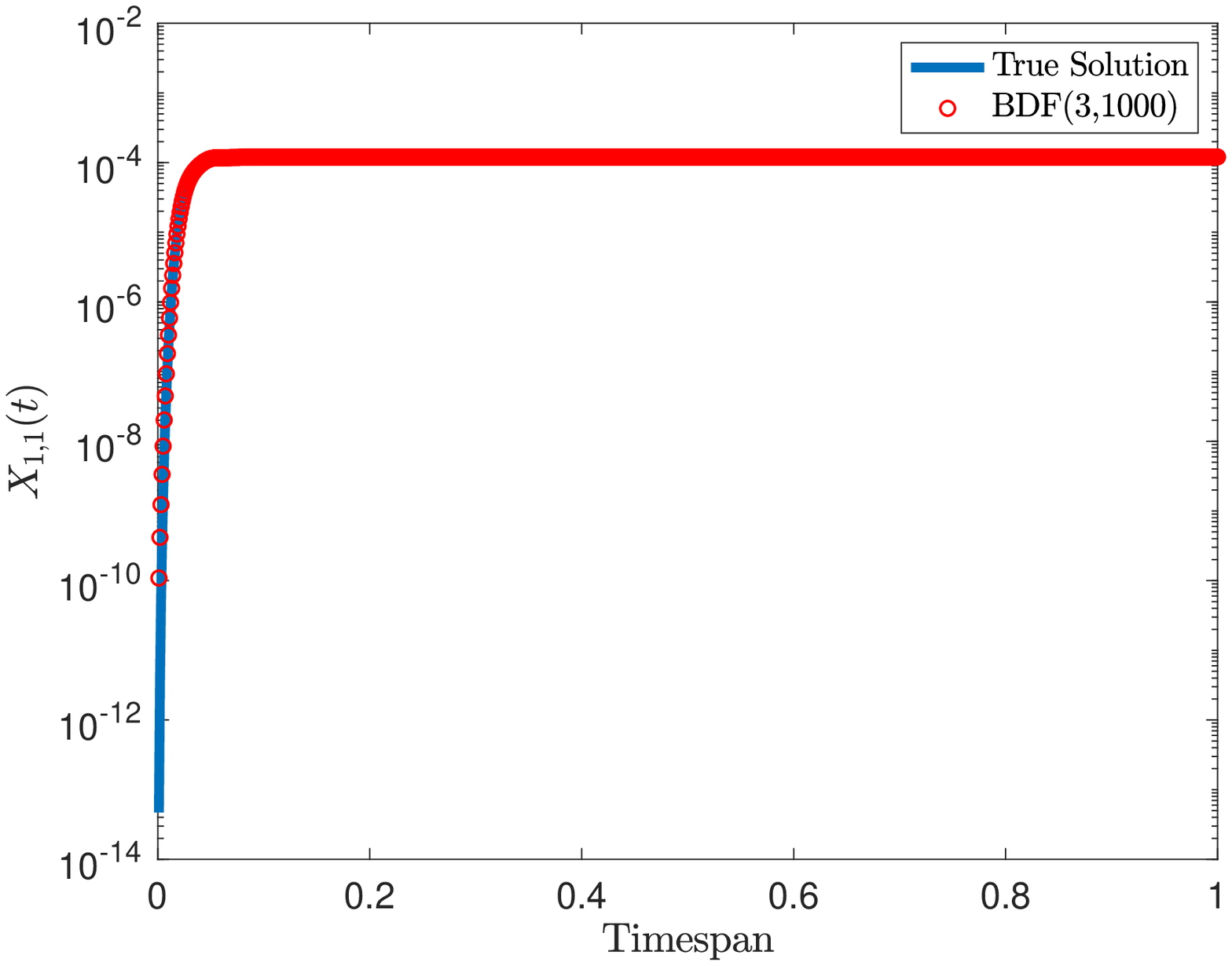}
      \caption[Examples of sparse matrices]{Typical convergence behavior of BDF methods (left) and 
evolution of the $X_{1,1}$ component of the reference and BDF($3,1000$) solution (right).}
    \label{bdfs}
    \end{center}
\end{figure}
These plots illustrate that we cannot expect an overall high accuracy of the projection
method as long as the reduced differential equation is not solved with sufficiently good accuracy.
The importance of this is discussed in more detail in the following section.

\section{Stopping criterion and the complete algorithm} \label{sec:krylovdre}
To complete the reduction algorithm of section \ref{sec:projection}, we need to
introduce a stopping criterion. We found that it is crucial to take into
account the accuracy of the numerical method employed
to solve the reduced DRE, as discussed in section \ref{sec:internal}.

To derive our stopping criterion we were inspired by those in \cite{Koskela2018, Guldogan2017}, 
however, we made some important modifications.
In both cited references, the authors assume 
that the inner problem \cref{projdre} is solved exactly, which is not true in general. We thus 
consider that  
the numerical method solves the reduced problem with residual matrix
$R_{m}^{(I)}(t) := \dot{Y}_{m}(t) - \mathbf{F}(Y_{m}(t))$, so that
the final DRE residual can be split into two components.
\begin{proposition}\label{thm:res}
Let $X_{m}(t) = V_{m}Y_{m}(t)V_{m}^{T}$ be the Krylov-based approximate solution after 
$m$ iterations, where $Y_{m}(t)$ approximately solves the reduced problem \cref{projdre}. With
the previous notation, the residual 
matrix
$\mathcal{R}_{m}(t)= \dot{X}_{m}(t) - \mathbf{F}(X_{m}(t))$ satisfies
\begin{equation}
\label{eq:timeres}
\|\mathcal{R}_{m}(t)\|_F^{2} = \|R_{m}^{(I)}(t)\|_F^{2} + 2\| R_{m}^{(O)}(t)\|_F^{2},
\end{equation}
where $R_{m}^{(I)}(t) = \dot{Y}_{m}(t) - \mathbf{F}(Y_{m}(t))$ 
and $R_{m}^{(O)}(t) = \tau_{m}^{T}Y_{m}(t)$ with $\tau_{m}$ as in (\ref{genericrelation}).
\end{proposition}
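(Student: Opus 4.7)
The plan is to insert the ansatz $X_m(t)=\mathcal{V}_m Y_m(t)\mathcal{V}_m^T$ into the DRE residual and then isolate two structurally different contributions: an ``inner'' part that lives in $\mathrm{range}(\mathcal{V}_m)\otimes\mathrm{range}(\mathcal{V}_m)$ and an ``outer'' part that involves the Krylov residual direction $\nu_{m+1}$. The identity (\ref{eq:timeres}) will then follow by computing the Frobenius norm and exploiting the orthogonality $\mathcal{V}_m^T\nu_{m+1}=0$ to kill all cross terms.

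First I would use the Arnoldi-type relation (\ref{genericrelation}) to rewrite $A^T\mathcal{V}_m=\mathcal{V}_m\mathcal{T}_m^T+\nu_{m+1}\tau_m^T$, so that
\begin{equation*}
A^TX_m = \mathcal{V}_m\mathcal{T}_m^T Y_m\mathcal{V}_m^T + \nu_{m+1}\tau_m^T Y_m\mathcal{V}_m^T,
\qquad
X_mA = \mathcal{V}_m Y_m\mathcal{T}_m\mathcal{V}_m^T + \mathcal{V}_m Y_m\tau_m\nu_{m+1}^T.
\end{equation*}
For the constant term I use that $C^T$ lies in the starting block $N=[C^T,Z]$ of the Krylov subspace, hence $C^T=\mathcal{V}_m\mathcal{V}_m^TC^T=\mathcal{V}_m C_m^T$, giving $C^TC=\mathcal{V}_m C_m^TC_m\mathcal{V}_m^T$. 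The quadratic term is handled analogously since $B_m=\mathcal{V}_m^TB$. Collecting everything and factoring out $\mathcal{V}_m(\cdot)\mathcal{V}_m^T$ on the inner part yields
\begin{equation*}
\mathcal{R}_m(t) = \mathcal{V}_m R_m^{(I)}(t)\mathcal{V}_m^T \;-\; \nu_{m+1}R_m^{(O)}(t)\mathcal{V}_m^T \;-\; \mathcal{V}_m R_m^{(O)}(t)^T\nu_{m+1}^T,
\end{equation*}
with $R_m^{(I)}=\dot Y_m-\mathbf{F}(Y_m)$ and $R_m^{(O)}=\tau_m^TY_m$; here I have used $\dot X_m=\mathcal{V}_m\dot Y_m\mathcal{V}_m^T$ which follows since $\mathcal{V}_m$ is time-independent.

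Next I would compute $\|\mathcal{R}_m(t)\|_F^2=\mathrm{trace}(\mathcal{R}_m^T\mathcal{R}_m)$ by expanding the three-term sum above. Denoting the three summands by $M$, $N_1$ and $N_1^T$ respectively, all nine cross products simplify drastically by repeated use of $\mathcal{V}_m^T\mathcal{V}_m=I$ and $\mathcal{V}_m^T\nu_{m+1}=0$ (together with $\nu_{m+1}^T\nu_{m+1}=I$ for the block version). Concretely, $\mathrm{trace}(MN_1^T)$ and $\mathrm{trace}(N_1M)$ vanish after cyclic permutation because a factor $\mathcal{V}_m^T\nu_{m+1}$ appears inside the trace; similarly $MN_1$, $N_1^TM$ and $N_1^2$ are identically zero. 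The surviving contributions are $\mathrm{trace}(M^2)=\|R_m^{(I)}\|_F^2$, together with $\mathrm{trace}(N_1N_1^T)=\mathrm{trace}(N_1^TN_1)=\|R_m^{(O)}\|_F^2$, yielding the claimed sum with a factor two on the outer term.

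The only genuinely delicate point is the assertion $C^T\in\mathrm{range}(\mathcal{V}_m)$, which is not a pure Arnoldi identity but rather a choice made in section~\ref{sec:projection} where the starting block is fixed as $N=[C^T,Z]$; I would state this explicitly as a standing assumption so that the constant and initial-condition terms reduce cleanly. All remaining manipulations are bookkeeping with the Arnoldi-type relation and the orthogonality of the basis, so no additional structural insight is required.
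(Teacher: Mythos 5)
Your proposal is correct and follows essentially the same route as the paper: substitute the ansatz, use the Arnoldi-type relation and $C^T=\mathcal{V}_mC_m^T$ to split the residual into the inner block plus the two outer terms along $\nu_{m+1}$, and exploit orthogonality to eliminate cross terms in the Frobenius norm. The only cosmetic difference is that the paper packages the three terms as $\mathcal{V}_{m+1}\mathcal{J}_m(t)\mathcal{V}_{m+1}^T$ with a $2\times 2$ block matrix $\mathcal{J}_m$ and invokes unitary invariance, whereas you expand the trace directly; the computation is identical.
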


\begin{proof}
Substituting \cref{projection} into the residual $\mathcal{R}_{m}(t)$ we obtain
 \begin{equation}
\begin{split}
\label{resproof1}
\mathcal{R}_{m}(t) = \mathcal{V}_{m}\dot{Y}_{m}(t)\mathcal{V}_{m}^{T} &- A^{T}\mathcal{V}_{m}Y_{m}(t)\mathcal{V}_{m}^{T} - \mathcal{V}_{m}Y_{m}(t)\mathcal{V}_{m}^{T}A\\ &+ \mathcal{V}_{m}Y_{m}(t)\mathcal{V}_{m}^{T}BB^{T}\mathcal{V}_{m}Y_{m}(t)\mathcal{V}_{m}^{T} - C^{T}C.
\end{split}
\end{equation}
Since $C^T$ belongs to range$({\cal V}_m)$, we can write $C^T={\cal V}_mC_m^{T}$.
Using~\cref{genericrelation}, we get
\begin{equation*}
\begin{split}
\!\!\!\mathcal{R}_{m}(t) = \mathcal{V}_{m}\dot{Y}_{m}(t)\mathcal{V}_{m}^{T} &- (\mathcal{V}_{m}\mathcal{T}_m^{T} +  
\nu_{m+1}\tau_{m}^{T})Y_{m}(t)\mathcal{V}_{m}^{T}
 - \mathcal{V}_{m}Y_{m}(t)( \mathcal{T}_{m}\mathcal{V}_{m}^{T} + \tau_{m}\nu_{m+1}^{T})\\ 
&+ \mathcal{V}_{m}Y_{m}(t)\mathcal{V}_{m}^{T}BB^{T}\mathcal{V}_{m}Y_{m}(t)\mathcal{V}_{m}^{T} 
     - {\cal V}_mC_m^{T} C_m{\cal V}_m^{T}.
\end{split}
\end{equation*}
Since $\mathcal{V}_{m+1} = [\mathcal{V}_{m}, \nu_{m+1}]$, we can write
$\mathcal{R}_{m}(t) = \mathcal{V}_{m+1}\mathcal{J}_{m}(t)\mathcal{V}_{m+1}^{T}$, where
{\small
\begin{equation*}
\mathcal{J}_{m}(t) = \left[
\begin{array}{c|c}
 \dot{Y}_{m}(t) - \mathcal{T}_{m}^{T}Y_{m}(t) - Y_{m}(t)\mathcal{T}_{m} + Y_{m}(t)B_{m}B_{m}^{T}Y_{m}(t) - C_m^{T} C_m   &      Y_{m}(t)\tau_{m} \\ \hline
\tau_{m}^{T}Y_{m}(t)  & \mathbf{0}
\end{array}
\right].
\end{equation*} }
Let $R_{m}^{(I)}(t)$ be the residual of the numerical ODE inner solver. Then
\begin{equation*}
\mathcal{J}_{m}(t) = \left[
\begin{array}{c|c}
R_{m}^{(I)}(t)    &      Y_{m}(t)\tau_{m} \\ \hline
\tau_{m}^{T}Y_{m}(t)  & \mathbf{0}
\end{array}
\right].
\end{equation*}
Since the columns of $\mathcal{V}_{m+1}$ are orthonormal,
\begin{equation*}
\begin{split}
\|\mathcal{R}_{m}(t) \|_{F}^2 &= \|\mathcal{V}_{m+1}\mathcal{J}_{m}(t)\mathcal{V}_{m+1}^{T}\|_{F}^2
= \|\mathcal{J}_{m}(t)\|_{F}^2\\
&= \mbox{Tr}\left(R_{m}^{(I)}(t)^{T}R_{m}^{(I)}(t) + 2(Y_{m}(t)\tau_{m})(\tau_{m}^{T}Y_{m}(t)) \right),
\end{split}
\end{equation*}
that is, $\|\mathcal{R}_{m}(t)\|_F^{2} = \|R_{m}^{(I)}(t)\|_F^{2} + 2\| \tau_{m}^{T}Y_{m}(t)\|_F^{2}$,
and the result follows. 
\end{proof}
The expression for ${\cal J}_m(t)$ emphasizes that at each iteration $m$
the matrix $Y_m(t)$ is the exact solution of
$$
\dot{Y}_{m}(t) - \mathcal{T}_{m}^{T}Y_{m}(t) - 
Y_{m}(t)\mathcal{T}_{m} + Y_{m}(t)B_{m}B_{m}^{T}Y_{m}(t) - C_m C_m^{T} - R_{m}^{(I)}(t) = 0 .
$$
Hence, as long as $\|R_{m}^{(I)}(t)\|_{F}$ is not very small, the increase of $m$ aims at more and more
accurately approximating a ``nearby'' differential problem to the truly projected one, with 
a term $R_{m}^{(I)}(t)$ that varies with $m$. Hence, $X_m(t) = {\cal V}_m Y_m {\cal V}_m^T$ is
an approximation not to $X(t)$, but to the solution of a differential problem with an
additional term whose projection onto the space is $R_{m}^{(I)}(t)$.

Proposition \ref{thm:res} also implies that 
we cannot expect an overall small residual norm if either of the two partial
residual norms $\|R_{m}^{(I)}(t)\|_F$, $\|R_{m}^{(O)}(t))\|_F$
is not small. 
In particular, we observe that the two residuals can be made small independently. 
%
Therefore we propose the following practical strategy:
\begin{enumerate}[i)]
\item[(i)] Run the algorithm as presented, with a low-order cheap 
ODE inner solver (i.e., BDF(1, $\ell$) with $\ell$ relatively small) and use ${R}_{m}^{(O)}(t)$ in the stopping criterion;
\item[(ii)] Once completed step (i) after $\widehat{m}$ iterations, 
use the matrices $T_{\widehat{m}}, C_{\widehat{m}}, B_{\widehat{m}}$ and $Z_{\widehat{m}}$ to
 refine the ODE inner solution by using a higher-order ODE solver for the projected system.
\end{enumerate}
The final matrix $Y_{\widehat m}(t)$ obtained in step (ii) will provide a more accurate solution matrix
than what would have been obtained at the end of step (i). We emphasize that 
\emph{any} ODE method for small and medium scale DREs could be used at steps (i) and (ii). 
Our choice of BDF(1, $\ell)$ is due to its good trade-off between accuracy and computational effort;
other approaches could be considered.

To complete the description of the stopping criterion, we recall that $R_{m}^{(O)}(t)$ depends on $t$, so
that we need to estimate the integral over the whole time interval by means of a quadrature formula, that is
 \begin{equation} \label{cheapy}
\int_0^{t_f} \|R_{m}^{(O)}(\gamma)\|_F d\gamma \approx \sum_{j = 1}^{\ell} \frac{t_f}{\ell} \|R_{m}^{(O)}(t_j)\|_F  =:
\rho_m
\end{equation}
where the interval $[0,t_f]$ has been divided into $\ell$ intervals with nodes $t_j$.

The overall algorithm\footnote{A Matlab implementation of both algorithms 
will be made available upon publication of this work.}
based on the rational Krylov subspace method is
reported in \cref{alg:rksmdre}, while the algorithm based on the extended method is
postponed to \cref{appb}.
Several implementation issues of \cref{alg:rksmdre} are
also described in \cref{matral}, such as the use of a real basis in case of complex
shifts $s_j$ in the basis construction.

\begin{algorithm}
\caption{RKSM-DRE}
\label{alg:rksmdre}
\begin{algorithmic}[0]
\REQUIRE{$A \in \mathbb{R}^{n \times n}$, $B \in \mathbb{R}^{n \times s}$, $C \in \mathbb{R}^{p \times n}$, $Z \in \mathbb{R}^{n \times q}$, $tol$, $t_{f}$, $\ell$, $\mathbf{s_{0}} = \{s_{0}^{(1)}, s_{0}^{(2)}\}$}
\STATE{(i) Perform reduced $QR$: $[C^{T},\, Z] = V_{1}\Lambda_{1}$}
\STATE{\hskip 0.2in Set $\mathcal{V}_{1} \equiv V_{1}$}
	\STATE{\hskip 0.2in {\bf for}  $ m = 2,3 \dots$}
	\STATE{\hskip 0.4in  Compute the next shift and add it to $\mathbf{ s_{0} }$}
	\STATE{\hskip 0.4in Compute the next {\it real} basis block $V_{m}$} 
	\STATE{\hskip 0.4in Set $\mathcal{V}_{m} = [\mathcal{V}_{m-1}, V_{m}]$}
	\STATE{\hskip 0.4in Update $\mathcal{T}_{m} = \mathcal{V}_{m}^{T}A\mathcal{V}_{m}$ and 
$B_{m} 	= \mathcal{V}_{m}^{T}B$, $Z_{m} = \mathcal{V}_{m}^{T}Z$  and $C_{m} = C\mathcal{V}	_{m}$}
	\STATE{\hskip 0.4in  Integrate \cref{projdre} from 0 to $t_f$ using BDF($1,\ell$)}
	\STATE{\hskip 0.4in Compute $\rho_m$ using \cref{cheapy} where $\tau_{m}^{T} = G_{m}^{T}$}	 
		\STATE{\hskip 0.4in {\bf if} $\rho_m < tol$}
		\STATE{\hskip 0.6in \textbf{go to} (ii)}
		\STATE{\hskip 0.4in {\bf end if}}
\STATE{\hskip 0.2in{\bf end for}}
\STATE{(ii) Refinement: solve \cref{projdre} with a more accurate integrator}
\STATE{Compute $Y_{m}(t_j) = \widehat{Y}_{m}(t_j)\widehat{Y}_{m}(t_j)^{T}$, $j=1, \ldots, \ell$ using the truncated SVD}
\RETURN $\mathcal{V}_{m} \in \mathbb{R}^{n \times m(p+q)}$ and $\ell$ 
factors $\widehat{Y}_{m}(t_j) \in \mathbb{R}^{m(p+q) \times r}$, $j=1, \ldots, \ell$ 
\end{algorithmic}
\end{algorithm}

\section{Stability analysis and error bounds}
\label{sec:stab}
In this section, we provide a few results on the spectral
and convergence properties of the obtained approximate solution
We first inspect some properties of the 
asymptotic matrix solution, which solves the algebraic Riccati equation.
Then we propose a bound for the error matrix, in an appropriate functional norm.

\subsection{Properties of the (steady state) algebraic Riccati equation}
\label{sec:algstab}
Properties associated with the algebraic Riccati equation -- as
asymptotic solution to the DRE -- are well known in 
linear quadratic optimal control, see, e.g., \cite{Kwakernaak1972, CORLESS2003}.
%
%
In particular, classical uniqueness and stabilization properties 
of the solution (see, e.g., \cite[Lemma 12.7.2]{CORLESS2003}),
can directly be extended to the reduced DRE \cref{projdre}.
\begin{corollary} \label{cor:reduced}
Let $({\cal T}_{m}, B_{m}, C_{m})$ be stabilizable and detectable system. Let $Y_{m}(t)$ be the solution of \cref{projdre} at time $t$ and let $Y_{m}^{\infty} = \lim_{t \rightarrow \infty} Y_{m}(t)$. Then $Y_{m}^{\infty}$ is the unique symmetric nonnegative definite solution and the only stabilizing solution to the (reduced) algebraic Riccati equation 
\begin{equation} \label{projare}
0 = {\cal T}_{m}^{T}Y_{m}^{\infty} + Y_{m}^{\infty}{\cal T}_{m} - Y_{m}^{\infty}B_{m}B_{m}^{T}Y_{m}^{\infty} + C_{m}^{T}C_{m}.
\end{equation}
Moreover, if the pair $(C_{m},{\cal T}_{m})$ is observable, $Y_{m}^{\infty}$ is strictly positive definite.
\end{corollary}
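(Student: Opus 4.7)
The plan is to invoke the classical stabilization/uniqueness result \cite[Lemma 12.7.2]{CORLESS2003} directly on the reduced triple $(\mathcal{T}_m, B_m, C_m)$. This is legitimate because the projected equation \cref{projdre} is itself a symmetric DRE of exactly the same algebraic form as \cref{dre}: the ``state weight'' $C_m^T C_m$ and the ``input weight'' $B_m B_m^T$ are symmetric and nonnegative definite by construction, and the initial datum $Y_m(0)=Z_m Z_m^T$ is also nonnegative definite. The hypotheses required by the classical lemma are precisely stabilizability of $(\mathcal{T}_m, B_m)$ and detectability of $(C_m, \mathcal{T}_m)$, both of which are assumed in the statement, so no further verification is needed.

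Concretely, I would argue in three short steps. First, I would observe that under the stated hypotheses the limit $Y_m^\infty := \lim_{t\to\infty} Y_m(t)$ exists and is symmetric nonnegative definite, which is the standard monotone convergence theorem for the DRE applied to the reduced system starting from the nonnegative definite initial condition $Y_m(0) = Z_m Z_m^T$. Passing to the limit as $\dot Y_m(t)\to 0$ in \cref{projdre} shows that $Y_m^\infty$ satisfies the reduced algebraic Riccati equation \cref{projare}. Second, I would quote \cite[Lemma 12.7.2]{CORLESS2003} applied to $(\mathcal{T}_m, B_m, C_m)$ to conclude that $Y_m^\infty$ is the unique symmetric nonnegative definite solution of \cref{projare} and that it is the unique stabilizing one, in the sense that $\mathcal{T}_m - B_m B_m^T Y_m^\infty$ is Hurwitz.

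For the final assertion, I would upgrade detectability to observability of $(C_m, \mathcal{T}_m)$ and again appeal to the classical ARE theory (e.g.\ the observability variant of the same lemma), which forces $Y_m^\infty$ to be strictly positive definite rather than merely nonnegative definite.

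There is essentially no obstacle: the corollary is a verbatim transcription of the classical result from the original data $(A,B,C)$ to the reduced data $(\mathcal{T}_m, B_m, C_m)$, which is why it is posed as a corollary in the first place. The only point to be careful with is bookkeeping — matching the sign conventions of \cref{projare} with the form of the ARE used in \cite{CORLESS2003}, and confirming that the solution concept (symmetric, nonnegative definite, stabilizing) carries over unchanged — but this is purely notational.
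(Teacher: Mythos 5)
Your proposal matches the paper's treatment: the corollary is stated there without a written proof precisely because, as you say, it is the classical result of \cite[Lemma 12.7.2]{CORLESS2003} applied verbatim to the reduced triple $({\cal T}_m, B_m, C_m)$, whose projected DRE has the same symmetric structure with nonnegative definite weights $C_m^TC_m$, $B_mB_m^T$ and initial datum $Z_mZ_m^T$. Your three-step elaboration (existence of the limit, passage to the ARE, uniqueness/stabilization, plus the observability upgrade for strict positivity) is a correct and slightly more explicit version of the same argument.
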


We notice that the stabilizability and detectability properties
of $({\cal T}_{m}, B_{m}, C_{m})$ are not necessarily implied by those on $(A, B,C)$.
Nevertheless, it is shown in \cite{Simoncini2016} that if 
there exists a feedback matrix $K$, such that the linear dynamical system $ \dot{x}=(A - BK)x$ is dissipative, then the 
pair $({\cal T}_{m}, B_{m})$ is stabilizable. 
{A similar result can be formulated for the 
detectability of $(C_{m}, {\cal T}_{m})$, since by duality reasoning, 
$(C_{m}, {\cal T}_{m})$ is detectable if $({\cal T}_{m}^{T},C_{m}^{T})$ is stabilizable. The question regarding the existence of such a feedback matrix, with respect to $A$ and $B$ ($A^T$ and $C^T$), is addressed in \cite{Guglielmi2019}.

With these results, we can relate the asymptotic solution of the
original and projected problems.
Let $X_{m}(t) = {\cal V}_{m}Y_{m}(t){\cal V}_{m}^{T}$ and 
$X_{m}^{a} = {\cal V}_{m}Y_{m}^{\infty}{\cal V}_{m}^{T}$ respectively be approximate solutions to 
\cref{dre,are} by a projection onto $\mbox{range}({\cal V}_{m})$. 
If there exist matrices $K$ and $L$ such that the systems $\dot{x} = (A - BK)x$ and $\dot{x} = (A^{T} - C^{T}L)x$ are dissipative, then
\begin{equation}
\lim_{t \rightarrow \infty}X_{m}(t) = {\cal V}_{m}\lim_{t \rightarrow \infty}Y_{m}(t){\cal V}_{m}^{T}
 = {\cal V}_{m}Y_{m}^{\infty}{\cal V}_{m}^{T}
 = X_{m}^{a},
\end{equation}
that is, $X_{m}^{a}$ is the steady state solution of $X_{m}(t)$ when projected onto the same basis.

Under the hypotheses that $(A,B,C)$ is a stabilizable and detectable system, there
exists a unique non-negative and stabilizing solution 
$X_\infty$ to \cref{are} (see, e.g., \cite[Theorem 5]{Kucera1973}).
In \cite{Simoncini2016} a bound was derived for the error 
$X_{\infty} - X_{m}^{a}$ in terms of the matrix residual norm.
Here we complete the argument by stating that in exact arithmetic and if
the whole space can be spanned, the obtained approximate solution equals $X_\infty$.

 \begin{proposition}\label{propstar}
Suppose $(A,B,C)$ is stabilizable and detectable. 
Assume it is possible to determine $m_{*}$ such that $\mbox{dim}(range({\cal V}_{m_{*}})) = n$, and 
let $X_{m_{*}}^{a} = {\cal V}_{m_{*}}Y_{m_{*}}^{\infty}{\cal V}_{m_{*}}^{T}$ be the 
obtained approximate solution of \cref{are} after $m_{*}$ iterations. 
Then, $X_{m_{*}}^{a} = X_\infty$.
\end{proposition}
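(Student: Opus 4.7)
The plan is to exploit the fact that once $\dim(\mathrm{range}(\mathcal{V}_{m_*}))=n$, the matrix $\mathcal{V}_{m_*}\in\mathbb{R}^{n\times n}$ has orthonormal columns and is therefore square orthogonal, so $\mathcal{V}_{m_*}\mathcal{V}_{m_*}^{T} = \mathcal{V}_{m_*}^{T}\mathcal{V}_{m_*} = I_{n}$. In particular, the projected/restricted matrices are obtained by a genuine orthogonal similarity: $\mathcal{T}_{m_*}=\mathcal{V}_{m_*}^{T}A\mathcal{V}_{m_*}$ with $A=\mathcal{V}_{m_*}\mathcal{T}_{m_*}\mathcal{V}_{m_*}^{T}$, and analogously for $B_{m_*}=\mathcal{V}_{m_*}^{T}B$ and $C_{m_*}=C\mathcal{V}_{m_*}$. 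The whole argument will revolve around pushing the reduced ARE identity back to the full space through this change of basis.

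First I would verify that $(\mathcal{T}_{m_*},B_{m_*},C_{m_*})$ inherits stabilizability and detectability from $(A,B,C)$. Since stabilizability and detectability are invariant under state-space similarity transformations (the Hautus tests only involve ranks that are unaltered by multiplication by the orthogonal $\mathcal{V}_{m_*}$), the reduced system satisfies the hypotheses of \cref{cor:reduced}. Hence there exists a unique symmetric positive semidefinite stabilizing solution $Y_{m_*}^{\infty}$ of \cref{projare}.

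Next I would plug $X_{m_*}^{a}=\mathcal{V}_{m_*}Y_{m_*}^{\infty}\mathcal{V}_{m_*}^{T}$ into the left-hand side of the full ARE \cref{are} and use $\mathcal{V}_{m_*}^{T}\mathcal{V}_{m_*} = \mathcal{V}_{m_*}\mathcal{V}_{m_*}^{T} = I_{n}$ together with $C^{T}C=\mathcal{V}_{m_*}C_{m_*}^{T}C_{m_*}\mathcal{V}_{m_*}^{T}$ (which holds because $C^{T}\in\mathrm{range}(\mathcal{V}_{m_*})$). Collecting the $\mathcal{V}_{m_*}(\cdot)\mathcal{V}_{m_*}^{T}$ sandwiches gives
\begin{equation*}
A^{T}X_{m_*}^{a}+X_{m_*}^{a}A-X_{m_*}^{a}BB^{T}X_{m_*}^{a}+C^{T}C
=\mathcal{V}_{m_*}\bigl(\mathcal{T}_{m_*}^{T}Y_{m_*}^{\infty}+Y_{m_*}^{\infty}\mathcal{T}_{m_*}-Y_{m_*}^{\infty}B_{m_*}B_{m_*}^{T}Y_{m_*}^{\infty}+C_{m_*}^{T}C_{m_*}\bigr)\mathcal{V}_{m_*}^{T}=0,
\end{equation*}
so $X_{m_*}^{a}$ solves the original ARE \cref{are}.

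Finally, to identify $X_{m_*}^{a}$ with $X_{\infty}$ I would invoke the uniqueness of the stabilizing solution of \cref{are} under stabilizability and detectability of $(A,B,C)$. It suffices to check that $A-BB^{T}X_{m_*}^{a}$ is stable; since
\begin{equation*}
\mathcal{V}_{m_*}^{T}(A-BB^{T}X_{m_*}^{a})\mathcal{V}_{m_*}=\mathcal{T}_{m_*}-B_{m_*}B_{m_*}^{T}Y_{m_*}^{\infty},
\end{equation*}
and the right-hand side is stable by the stabilizing property of $Y_{m_*}^{\infty}$ for the reduced ARE, stability transfers to $A-BB^{T}X_{m_*}^{a}$ via the orthogonal similarity. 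By uniqueness, $X_{m_*}^{a}=X_{\infty}$. The only mildly delicate step is the preservation of the stabilizability/detectability hypotheses from the full to the reduced triple; however, because the reducing matrix is square and orthogonal here, this is just an orthogonal change of coordinates and no genuine difficulty arises.
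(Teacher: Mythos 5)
Your proof is correct and follows essentially the same route as the paper: exploit that $\mathcal{V}_{m_*}$ is square and orthogonal so that stabilizability and detectability transfer to the reduced triple, lift the reduced ARE back to the full space by multiplying with $\mathcal{V}_{m_*}$ and $\mathcal{V}_{m_*}^{T}$, and conclude by uniqueness. The only cosmetic difference is in the last step: you invoke uniqueness of the \emph{stabilizing} solution and explicitly verify stability of $A-BB^{T}X_{m_*}^{a}$ via the orthogonal similarity, whereas the paper appeals to uniqueness of the \emph{nonnegative definite} solution; both are available under the stated hypotheses.
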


\begin{proof}
Since ${\cal V}_{m_{*}}$ is square and orthogonal the projected ARE is given by
$$
0 = {\cal V}_{m_{*}}^{T}A^{T}{\cal V}_{m_*}Y_{m_*}^{\infty} + 
Y_{m_*}^{\infty}{\cal V}_{m_*}^{T}A{\cal V}_{m_*} - 
Y_{m_*}^{\infty}{\cal V}_{m_*}^{T}BB^{T}{\cal V}_{m_*}Y_{m_*}^{\infty} 
+ {\cal V}_{m_{*}}^{T}C^{T}C{\cal V}_{m_{*}} .
$$
From $(A, B, C)$ stabilizable and detectable it follows that
$({\cal V}_{m_*}^T A {\cal V}_{m_*}, {\cal V}_{m_*}^T B, C{\cal V}_{m_*})$ is also
stabilizable and detectable, so that $Y_{m_*}^\infty \ge 0$ and stabilizing.
Multiplying by ${\cal V}_{m*}$ (by ${\cal V}_{m*}^T$) from the left (right), we obtain
$
0 = A^{T}X_{m_{*}}^{a} + X_{m_{*}}^{a}A  - X_{m_{*}}^{a}BB^{T}X_{m_{*}}^{a} + C^{T}C,
$
that is, $X_{m_{*}}^{a} \ge 0$ is a solution to the original ARE. Since $X_{\infty}$ is the unique
nonnegative definite solution, it must be $X_{m_{*}}^{a}=X_\infty$.
\end{proof}

\subsection{Error bound for the differential Riccati equation}
In this section we derive a bound for the maximum error obtained by the reduction process, 
in terms of the residual 
\begin{equation}\label{residual}
\begin{split}
R_{m}(t) = A^{T}X_{m}(t) + X_{m}(t)A - X_{m}(t)BB^{T}X_{m}(t) + C^{T}C - \dot{X}_{m}(t).
\end{split}
\end{equation}
Note that $R_m(t)$ is the residual matrix with respect to the exact solution
of the reduced differential problem, that is, it also includes the discretization error.
A similar bound on the error has been derived 
for the nonsymmetric DRE in \cite{Angelova2018}, which used matrix perturbation techniques
from \cite{Konstantinov1991}. 

\begin{proposition}
For $t\in[0, t_f]$ let ${\cal E}_m(t) = X(t) - X_m(t)$ and assume that 
${\cal A}(t):=A - BB^{T}X(t)$ is stable for all $t\in[0, t_f]$. Denote
$$
{\color{black}\nu := 
\max_{t \in [0,t_{f}]}\left\{ \int_0^t \|\Phi_{{\mathcal{A}}^T}(t,s)\| \, \| \Phi_{\mathcal{A}}(t,s) \| ds \right\},
}
$$
where $\!\Phi_{{\cal A}}\!$ is the state-transition matrix satisfying
$\frac{\partial{\Phi}_{{\cal A}}(t,s)}{\partial t}\!=\!{\cal A}(t)\Phi_{{\cal A}}(t,s),
\Phi_{\cal A}(s,s)=I$. 
%
 
If $4\nu^{2}\| B \|^{2}\|R_{m}\|_{\infty_t} < 1$, then
$$
\|{\cal E}_m\|_{\infty_t} \le 2\nu \|R_m\|_{\infty_t},
$$
where $\|L\|_{\infty_t} = \max_{t\in[0,t_{f}]} \|L(t)\|$ for any continuous matrix function $L(t)$. 
\end{proposition}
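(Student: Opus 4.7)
The plan is a standard perturbation argument for the error matrix ${\cal E}_m(t)=X(t)-X_m(t)$, reduced to a quadratic inequality in the norm. First I would subtract \cref{dre} from the defining equation for $X_m(t)$, namely the rearrangement of \cref{residual},
\[
\dot X_m(t) = A^T X_m(t) + X_m(t) A - X_m(t)BB^TX_m(t) + C^TC - R_m(t),
\]
so that the quadratic term is linearized via the identity
\[
X BB^TX - X_m BB^TX_m = X BB^T{\cal E}_m + {\cal E}_m BB^TX_m.
\]
Substituting $X_m = X - {\cal E}_m$ on the right and collecting terms in ${\cal A}(t) = A - BB^TX(t)$, I would obtain the matrix differential equation
\[
\dot{\cal E}_m(t) = {\cal A}(t)^T {\cal E}_m(t) + {\cal E}_m(t){\cal A}(t) + {\cal E}_m(t) BB^T {\cal E}_m(t) + R_m(t),
\]
with initial condition ${\cal E}_m(0)=0$, which follows because $Z\in \mathrm{range}({\cal V}_m)$ by construction of the starting block, so $X_m(0)={\cal V}_m Z_m Z_m^T{\cal V}_m^T = ZZ^T$.

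Next I would treat the right-hand term ${\cal E}_m BB^T {\cal E}_m + R_m$ as an inhomogeneity for the time-varying two-sided linear operator $Y\mapsto {\cal A}^T Y + Y {\cal A}$, and write the variation-of-parameters representation in terms of the state-transition matrix $\Phi_{\cal A}$:
\[
{\cal E}_m(t) = \int_0^t \Phi_{{\cal A}^T}(t,s)\bigl[{\cal E}_m(s) BB^T {\cal E}_m(s) + R_m(s)\bigr]\Phi_{\cal A}(t,s)\, ds.
\]
Well-posedness and the representation are justified by the stability of ${\cal A}(t)$ on $[0,t_f]$. Taking Frobenius (or any induced) norm, using submultiplicativity on the triple product and bounding $\|{\cal E}_m(s)BB^T{\cal E}_m(s)\|\le \|B\|^2\|{\cal E}_m\|_{\infty_t}^2$, and pulling constants out of the integral yields
\[
\|{\cal E}_m(t)\| \le \Bigl(\int_0^t\|\Phi_{{\cal A}^T}(t,s)\|\,\|\Phi_{\cal A}(t,s)\|\,ds\Bigr)\bigl(\|B\|^2\|{\cal E}_m\|_{\infty_t}^2 + \|R_m\|_{\infty_t}\bigr).
\]
Taking the maximum over $t\in[0,t_f]$ on the left and invoking the definition of $\nu$ on the right gives the scalar inequality $\varepsilon \le \nu(\|B\|^2\varepsilon^2 + \rho)$, with $\varepsilon := \|{\cal E}_m\|_{\infty_t}$ and $\rho := \|R_m\|_{\infty_t}$.

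Finally, I would close by analyzing the quadratic inequality $\nu\|B\|^2\varepsilon^2 - \varepsilon + \nu\rho \ge 0$. Under the hypothesis $4\nu^2\|B\|^2\rho<1$ the discriminant $1-4\nu^2\|B\|^2\rho$ is positive, and $\varepsilon$ must lie either below the smaller root or above the larger one; a continuity argument (as $\rho\downarrow 0$ we have $\varepsilon\downarrow 0$) selects the smaller root, giving
\[
\varepsilon \le \frac{1 - \sqrt{1-4\nu^2\|B\|^2\rho}}{2\nu\|B\|^2}.
\]
Using $1-\sqrt{1-x}\le x$ for $x\in[0,1]$ with $x=4\nu^2\|B\|^2\rho$ yields $\varepsilon\le 2\nu\rho$, which is the claimed bound. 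The only real subtlety, and the step where care is required, is the branch selection in the quadratic inequality, together with the justification that the variation-of-parameters formula and the two-sided transition factors $\Phi_{{\cal A}^T}(t,s)$ and $\Phi_{\cal A}(t,s)$ are legitimate for the time-varying Lyapunov operator driven by ${\cal A}(t)$; everything else is bookkeeping.
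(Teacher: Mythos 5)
Your proposal is correct and follows essentially the same route as the paper's proof: the same linearization of the quadratic term around ${\cal A}(t)=A-BB^TX(t)$, the same variation-of-constants representation with the two-sided transition matrices, the same reduction to the scalar quadratic inequality $\varepsilon\le\nu(\|B\|^2\varepsilon^2+\rho)$, and an equivalent final simplification (your bound $1-\sqrt{1-x}\le x$ is exactly the paper's rationalization by $1+\sqrt{1-x}$ followed by bounding the denominator below by $1$). The one place you are slightly more careful than the paper is the branch selection in the quadratic inequality, which the paper passes over silently; your continuity argument in $\rho$ is a reasonable way to justify taking the smaller root.
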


\begin{proof}
By subtracting \cref{residual} from \cref{dre} and manipulating terms we obtain
$$
{\dot {\cal E}}_{m}(t) = (A-BB^{T}X(t))^T {\cal E}_{m}(t) + {\cal E}_{m}(t) (A-BB^{T}X(t)) + {\cal E}_{m}(t) BB^T {\cal E}_{m}(t) + R_{m}(t),
$$
with ${\cal E}_{m}(0) = 0$. Therefore, by the variation of constants formula (see, e.g., \cite{Kucera1973})
$$
{\cal E}_{m}(t) = \int_0^t \Phi_{{\cal A}^{T}}(t,s) \left(R_{m}(s) +
 {\cal E}_{m}(s) BB^T {\cal E}_{m}(s) \right) \Phi_{{\cal A}}(t,s)  ds.
$$ 
Taking norms yields
$$
\| {\cal E}_{m}(t) \|_{\infty_t} \leq \max_{t \in [0,t_f]} 
\int_0^t \| \Phi_{{\mathcal{A}}^T}(t,s) \|\, \| \Phi_{\mathcal{A}}(t,s) \| \left( \| R_{m}(s)\| + \|{\cal E}_{m}(s)\|^2 \|B\|^2\right)  ds,
$$ 
so that
$
\| {\cal E}_{m}(t) \|_{\infty_t} \leq  \nu \left( \| R_{m}(t)\|_{\infty_t} + \|{\cal E}_{m}(t)\|_{\infty_t}^2 \|B\|^2\right).
$
Solving this quadratic inequality yields
$$
\| {\cal E}_{m}(t) \|_{\infty_t} \leq \frac{1 - \sqrt{1 - 4\nu^{2}\| B \|^{2}\|R_{m}\|_{\infty_t}}}{2\nu \| B \|^2}.
$$
The result follows from multiplying and dividing by 
$(1 + \sqrt{1 - 4\nu^{2}\| B \|^{2}\|R_{m}\|_{\infty_t}})$
and noticing that at the denominator this quantity can be bounded from below by 1.
\end{proof}

We conclude with a remark on the intuitive fact that if the approximation space spans the whole
space, the obtained solution by projection necessarily coincides with the
sought after solution of the DRE.

\begin{remark}
If it is possible to determine $m_{*}$  such that  $\mbox{dim}({\cal V}_{m_{*}}) = n$,
then the approximate solution $X_{m_{*}}(t)$ coincides with $X(t)$ for all $t \geq 0$.
Indeed, let us write
  $X_{m_*}(t)= {\cal V}_{m_{*}}Y_{m_{*}}(t){\cal V}_{m_{*}}^{T}$, where
${\cal V}_{m_{*}}$ is square and orthogonal. The reduced DRE is given by
$$
\dot{Y}_{m_{*}}= 
{\cal V}_{m_{*}}^{T}A^{T}{\cal V}_{m_{*}}Y_{m_{*}} + 
Y_{m_{*}}{\cal V}_{m_{*}}^{T}A{\cal V}_{m_{*}} - Y_{m_{*}}{\cal V}_{m_{*}}^{T}BB^{T}{\cal V}_{m_{*}}Y_{m_{*}}
+ {\cal V}_{m_{*}}^{T}C^{T}C{\cal V}_{m_{*}} 
$$
with $Y_{m_*}=Y_{m_*}(t)$.
Multiplying by ${\cal V}_{m*}$ (by ${\cal V}_{m*}^T$) from the left (right), we obtain
\begin{equation*}
\dot{X}_{m_{*}}(t) = A^{T}X_{m_{*}}(t) + X_{m_{*}}(t)A  - X_{m_{*}}(t)BB^{T}X_{m_{*}}(t) + C^{T}C ,
\end{equation*}
hence, $X_{m_{*}}(t) \geq 0$ is a solution of \cref{dre}.
Since $X(t)$ is the unique nonnegative definite solution of \cref{dre} for any $X_{0} \geq 0$
(see, e.g., \cite{Kucera1973}), then $X_{m_{*}}(t)=X(t)$ for $t\geq 0$. $\,\,\square$
\end{remark}

\section{Numerical experiments}
\label{sec:main}
In this section we report on our numerical experience with the developed techniques.
We consider two artificial symmetric and nonsymmetric model problems, as well as three (of which two are nonsymmetric)
standard benchmark problems. Information about the considered data is contained in \cref{data}. 
For the first two datasets displayed in \cref{data}, the matrix $A$ stems from the 
finite difference discretization with homogenous Dirichlet boundary conditions on the unit square and unit cube,
 respectively. 
The first matrix ({\sc sym2d}) comes from the finite difference discretization of the two-dimensional Laplace operator
in the unit square with homogeneous boundary conditions,
while the second matrix ({\sc nsym3d}) stems from the finite difference
discretization of the three-dimensional differential operator 
\begin{equation*}
\mathcal{L}(u) = e^{xy}(u_{x})_{x} + e^{xy}(u_{y})_{y} + (u_{z})_{z} + (1+x)e^{-x}u_{x} + y^{2}u_{y} + 10(x+y)u_{z} ,
\end{equation*}
in the unit cube, with homogeneous boundary conditions.
For both datasets, the matrices $B, C$ and $Z$ are selected randomly with normally distributed entries. The realizations of the random matrices are fixed for both examples using the MATLAB command {\tt rng}:
for $B, C$ and $Z$ we use {\tt rng}(7), {\tt rng}(2) and {\tt rng}(3), respectively. 
The following two datasets ( {\sc chip} and {\sc flow}) are 
taken from \cite{OB2003}, and all coefficient matrices ($\widehat{A}$, $\widehat{B}$, $\widehat{C}$ and $\widehat{E}$) are contained in the datasets, which stem from the dynamical system
$$
\widehat{E}\dot{\widehat{x}} = \widehat{A}\widehat{x} + \widehat{B}u, 
\qquad \widehat{y} = \widehat{C}\widehat{x}.
$$

Since $\widehat{E}$ is diagonal and nonsingular, it is incorporated as 
$A = \widehat{E}^{-\frac 1 2}\widehat{A}\widehat{E}^{-\frac 1 2}$, while $\widehat{B}$ and $\widehat{C}$ are 
updated accordingly to form $B$ and $C$.

The final considered dataset { \sc(rail)} stems from a semi-discretized heat transfer problem for optimal cooling of steel profiles\footnote{Data available at {\tt http://modelreduction.org/index.php/Steel\textunderscore Profile} } \cite{Benner.05}. 
We consider the largest of the four available discretizations 
(file {\tt rail\textunderscore79841\textunderscore c60} containing $\widehat{A}, \widehat{B}, \widehat{C}$ and
$\widehat{E}$) with $n = 79841$. 
The symmetric and positive definite mass matrix $\widehat{E}$ 
has a sparsity pattern very similar to $\widehat{A}$. 
Both matrices are therefore reordered by the same approximate minimum degree 
({\sc rksm-dre}) or reverse Cuthill-McKee ({\sc eksm-dre}) permutation to limit fill-in. 
The state-space transformation is done using the Cholesky factorization of $\widehat{E}$.
More precisely, let $\widehat{E} = \widehat{E}_L\widehat{E}_L^T$ with $\widehat{E}_L$ lower
triangular, and consider the transformed state $x = \widehat{E}_L^T\widehat{x}$. Then 
$$
\dot{x} = Ax + Bu, \qquad y = Cx,
$$
with $A = \widehat{E}_L^{-1}\widehat{A}\widehat{E}_L^{-T}$, 
$B = \widehat{E}_L^{-1}\widehat{B}$ and $C = \widehat{C}\widehat{E}_{L}^{-T}$. 
These matrices are {\em never} explicitly formed, 
rather they are commonly applied implicitly by solves with the factor $\widehat{E}_L$ at each iteration;
see, e.g., \cite{Druskin2011, Simoncini2007}. 

The initial low-rank factors are selected as the zero vector for 
{\sc flow}, {$Z = \sin{g}$} for {\sc chip} and {$Z = \cos{g}$} for {\sc rail}, where 
$g \in \mathbb{R}^{n \times 1}$ is a vector with entries in $[0,2\pi]$. Other sufficiently
general choices were tried during our numerical investigation however results did
 not significantly differ from the ones we report.

\begin{table}[htb!]\caption{Relevant information concerning the experimental data}\label{data}
\centering
\begin{tabular}{|l|r|r|r|r|r|r|r|}
\hline
Name            & $n$      & $p$/$s$/$q$ & $||A||_{F}$ & $||B||_{F}$ & $||C||_{F}$ & $||Z||_{F}$ & $||E||_{F}$ \\ \hline
\sc sym2d & $640000$ & $5/1/1$            & $3.6 \cdot 10^{3}$  & $8.0 \cdot 10^{2}$  & $1.8 \cdot 10^{3}$  & $8.0 \cdot 10^{2}$ & $8 \cdot 10^2$  \\ 
\sc nsym3d       & $64000$ & 6/1/3            & $2.0 \cdot 10^{3}$  & $2.5 \cdot 10^{2}$  & $6.2 \cdot 10^{2}$  & $2.8 \cdot 10^{2}$ & $2.5 \cdot 10^2$  \\
\hline

Name            & $n$      & $p$/$s$/$q$ & $||\widehat{A}||_{F}$ & $||\widehat{B}||_{F}$ & $||\widehat{C}||_{F}$ & $||Z||_{F}$ & $||\widehat{E}||_{F}$ \\ \hline
\sc chip           & $20082$           & 5/1/1            & $2.2 \cdot 10^{6}$  & $1.7 \cdot 10^{2}$  & $3.3 \cdot 10^{4}$  & $1.0 \cdot 10^{2}$     & $2 \cdot 10^{-4}$           \\ 
\sc flow            & $9669$            & 5/1/1            & $4.5 \cdot 10^{6}$  & $2.0 \cdot 10^{4}$  & $1.2 \cdot 10^{3}$  & $-$             & $6.8 \cdot 10^{0}$        \\ 
\sc rail            & $79841$            & 7/6/1            & $7 \cdot 10^{-3}$  & $1 \cdot 10^{-7}$  & $6.2 \cdot 10^{0}$  & $1.9 \cdot 10^{2}$        & $8 \cdot 10^{-4}$         \\ \hline
\end{tabular}
\end{table}

\vskip 0.1in
{\it Performance of the projection methods.}
We first investigate the convergence behavior of the outer solver. 
{The quantity we monitor in our stopping criterion is the backward error in an integral norm given by
\begin{equation} \label{backerror}
 \frac{  \rho_m}{t_{f}\|C\|_F^2 + 2\xi_m + \psi_m},
\end{equation}
 with $\rho_m$ as in \cref{cheapy} and
$$
 \xi_{m} = \int_{0}^{t_f}\|A^{T}{\cal V}_{m}Y_{m}(\gamma)\|_{F} \, d\gamma \quad 
\mbox{and} \quad \psi_{m} = \int_{0}^{t_f}\|Y_{m}(\gamma)V_{m}^{T}B\|_{F}^{2} d\gamma.
$$ 
The integrals are approximated by a quadrature formula in a similar fashion to \cref{cheapy},
and we note that $\xi_{m}$ can be cheaply computed by using the Arnoldi-type relation.
}

For all datasets, the stopping tolerance was chosen as $10^{-7}$. For the first four datasets, $t_f = 1$ and {\sc bdf}(1,10) is used as inner solver. For {\sc rail}, $t_f = 4500$ (see e.g., \cite{Benner.05} for further details about the setting) and {\sc bdf(1,45)} is used as inner solver. \crefrange{fig:ex1}{fig:ex5} 
display the convergence of the 
rational Krylov subspace method (\cref{alg:rksmdre}, {\sc rksm-dre}) and of the extended Krylov subspace method 
(\cref{alg:eksmdre}, {\sc eksm-dre}). The left plots report the history of the backward error as the 
approximation space dimension increases, while the right plots display the same history versus the total
computational time (in seconds) as the iterations proceed. We notice that the cost of the refinement step is not
taken into account in these first tests.
\begin{figure}[htb!]       
    \includegraphics[width=.49\textwidth]{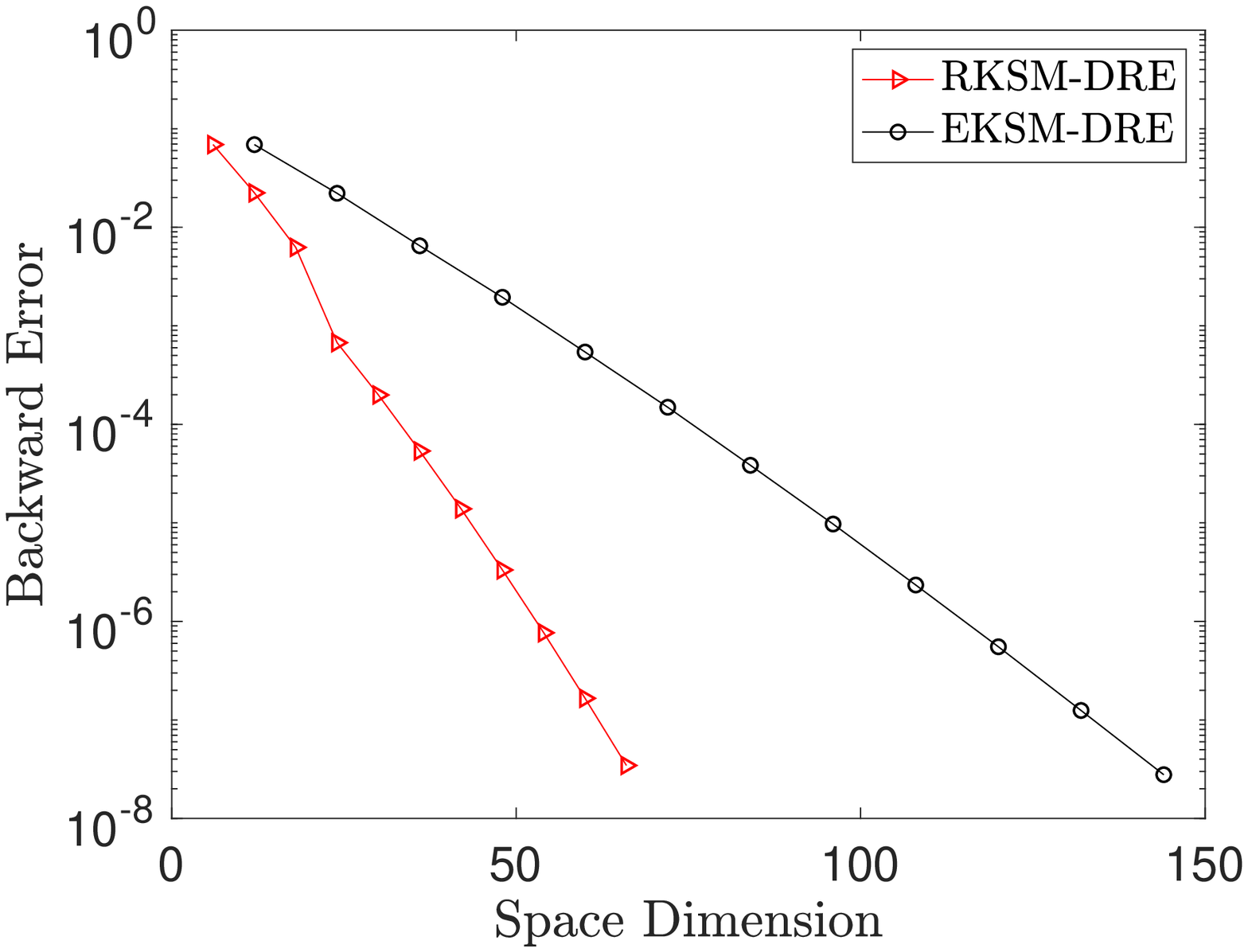}
    \hspace{0.1cm}
    \includegraphics[width=.49\textwidth]{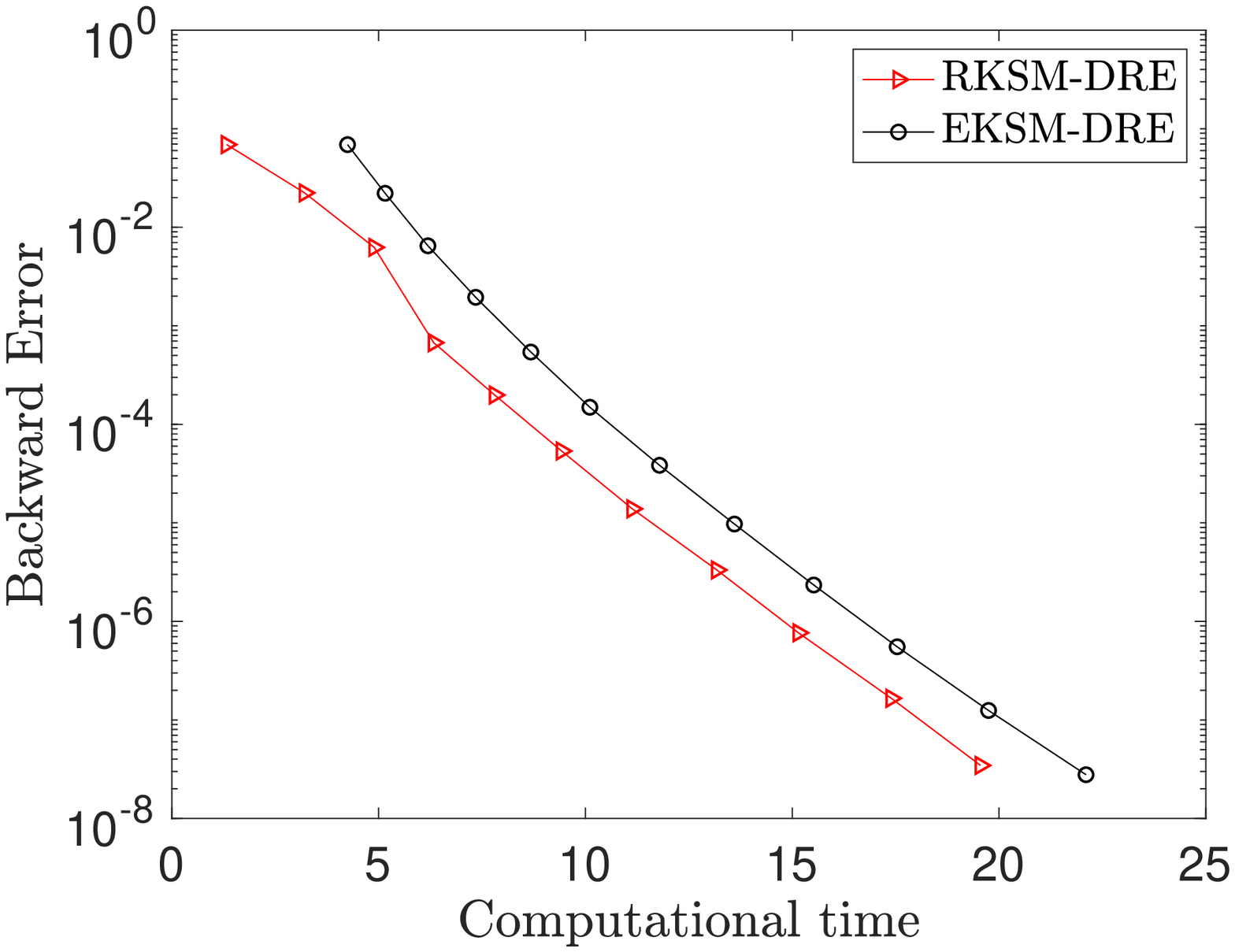}
    \caption[Examples of sparse matrices]{{\sc sym2d}: Convergence history for 
{\sc eksm-dre} and {\sc rksm-dre}. Left: backward error versus space dimension. Right:
backward error versus computational time.}
    \label{fig:ex1}
\end{figure}
 \begin{figure}[htb!]       
    \includegraphics[width=.49\textwidth]{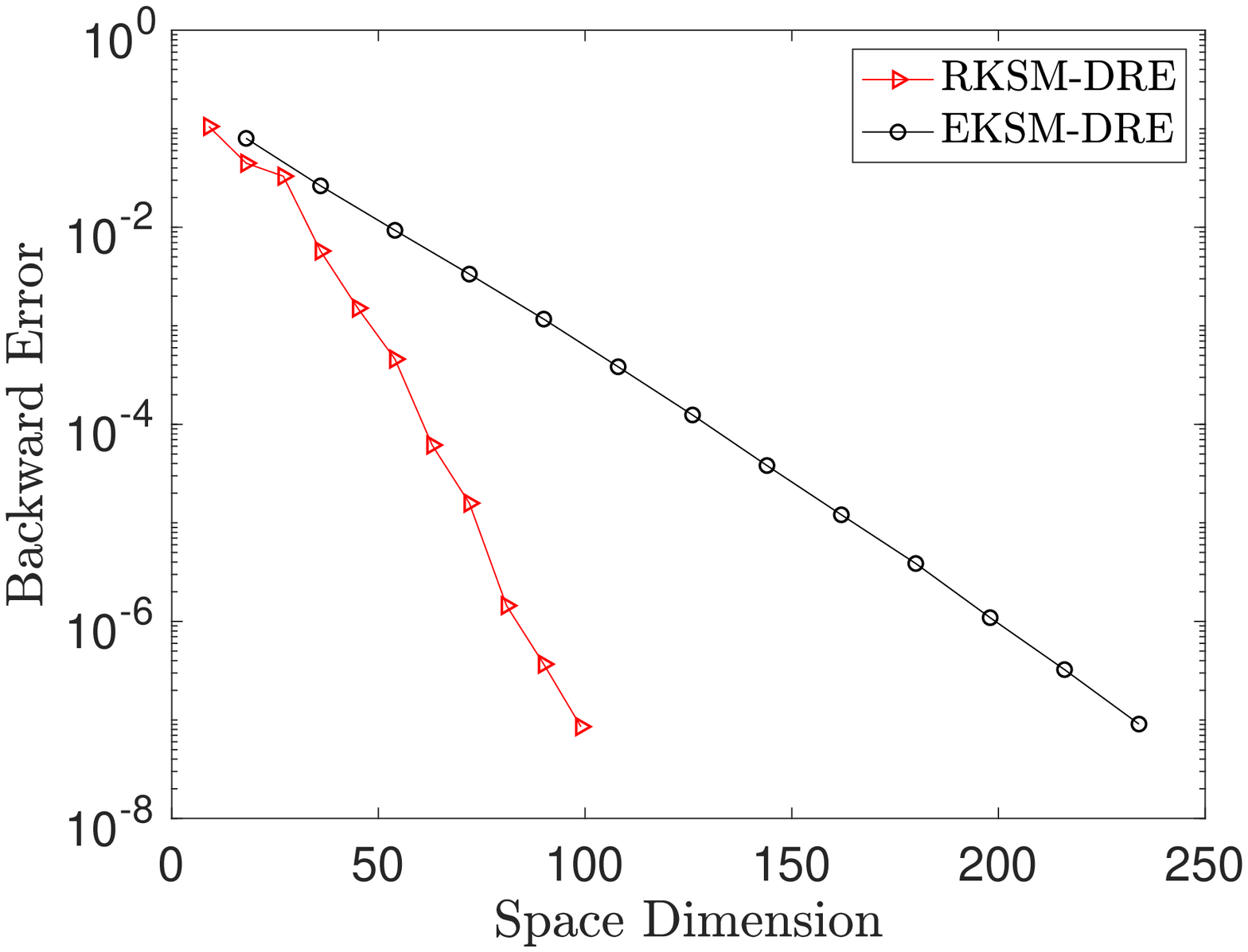}
    \hspace{0.1cm}
    \includegraphics[width=.49\textwidth]{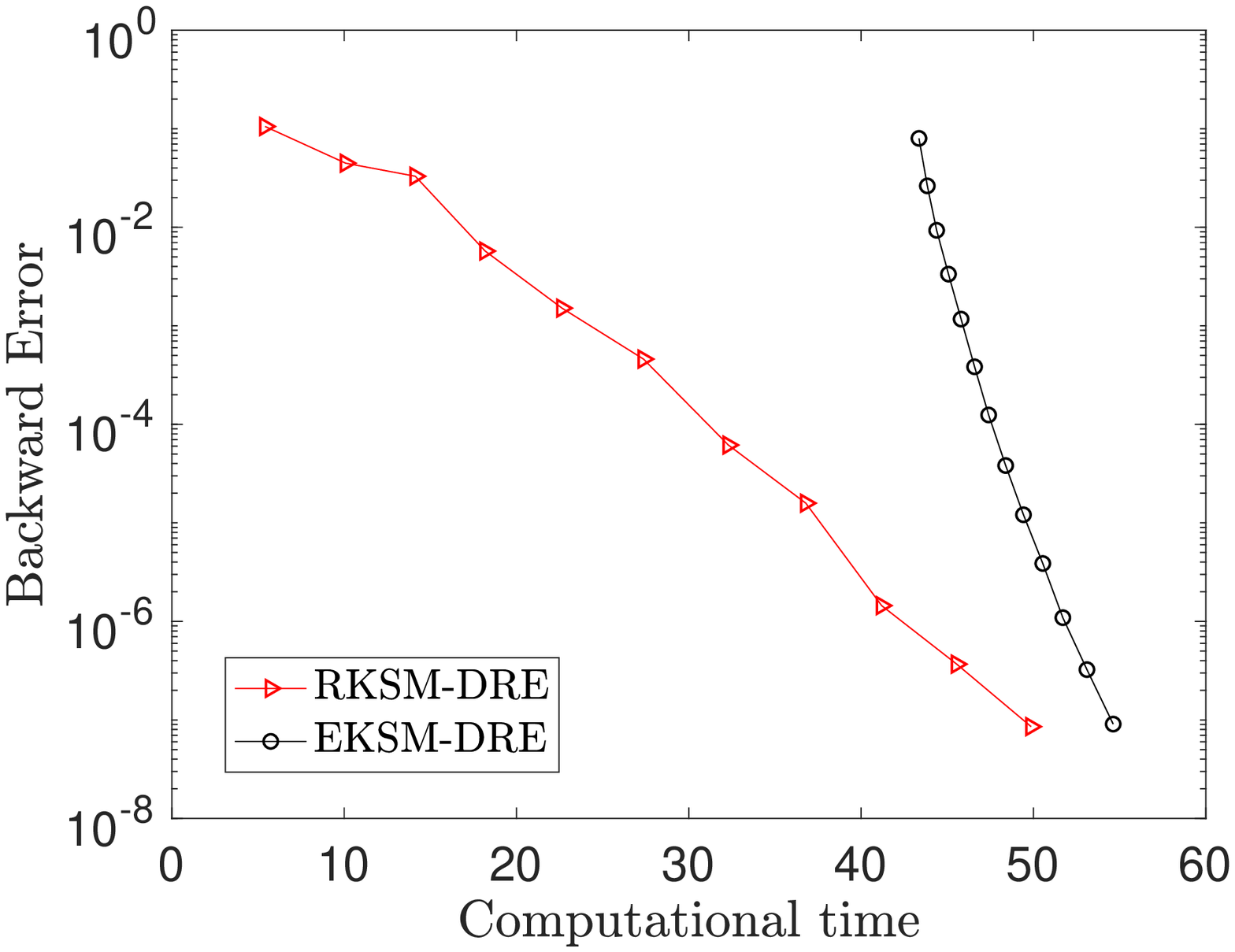}
    \caption[Examples of sparse matrices]{{\sc nsym3d}: Convergence history for 
{\sc eksm-dre} and {\sc rksm-dre}. Left: backward error versus space dimension. Right:
backward error versus computational time.}
    \label{fig:ex2}
\end{figure}

\begin{figure}[htb!]       
    \includegraphics[width=.49\textwidth]{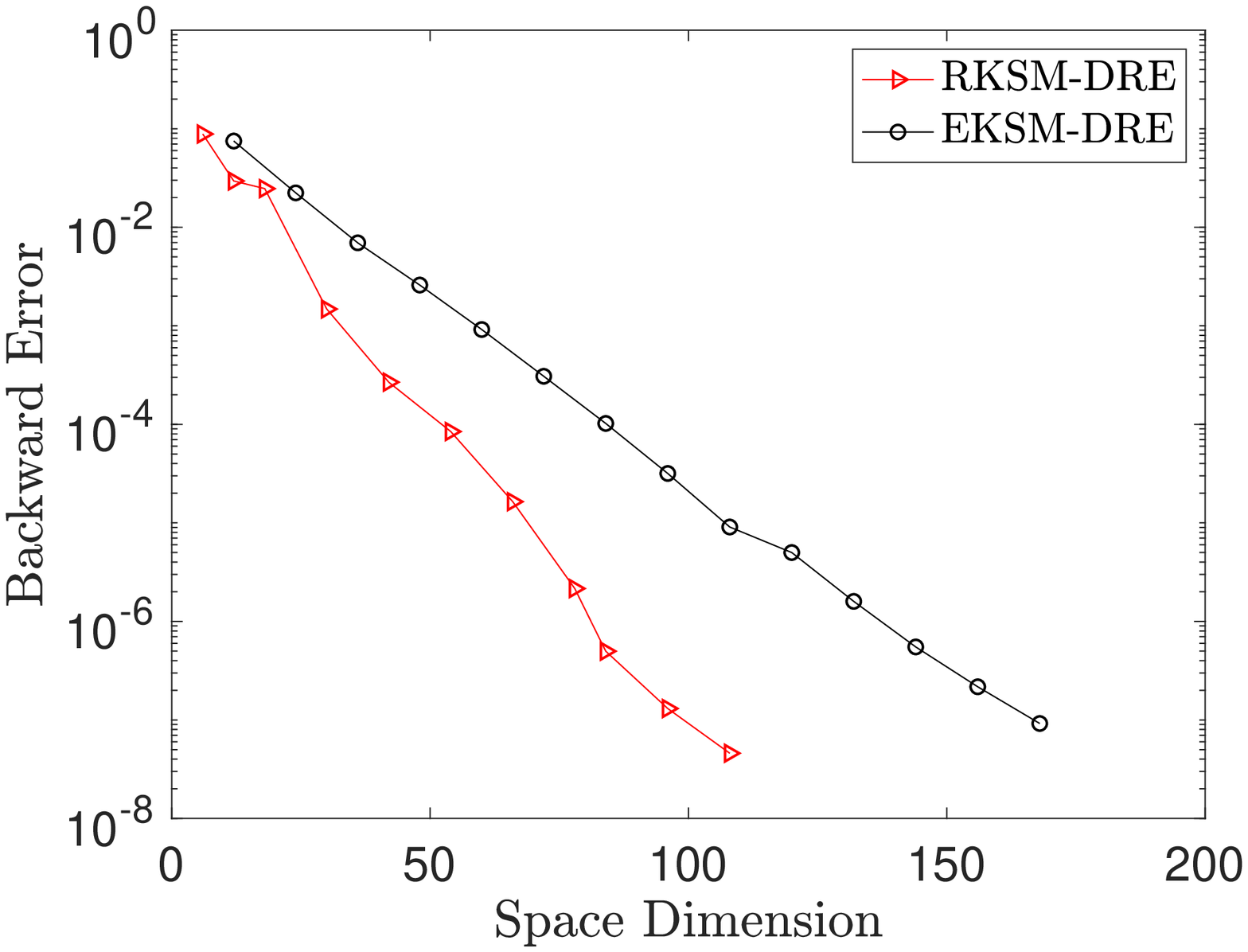}
    \hspace{0.1cm}
    \includegraphics[width=.49\textwidth]{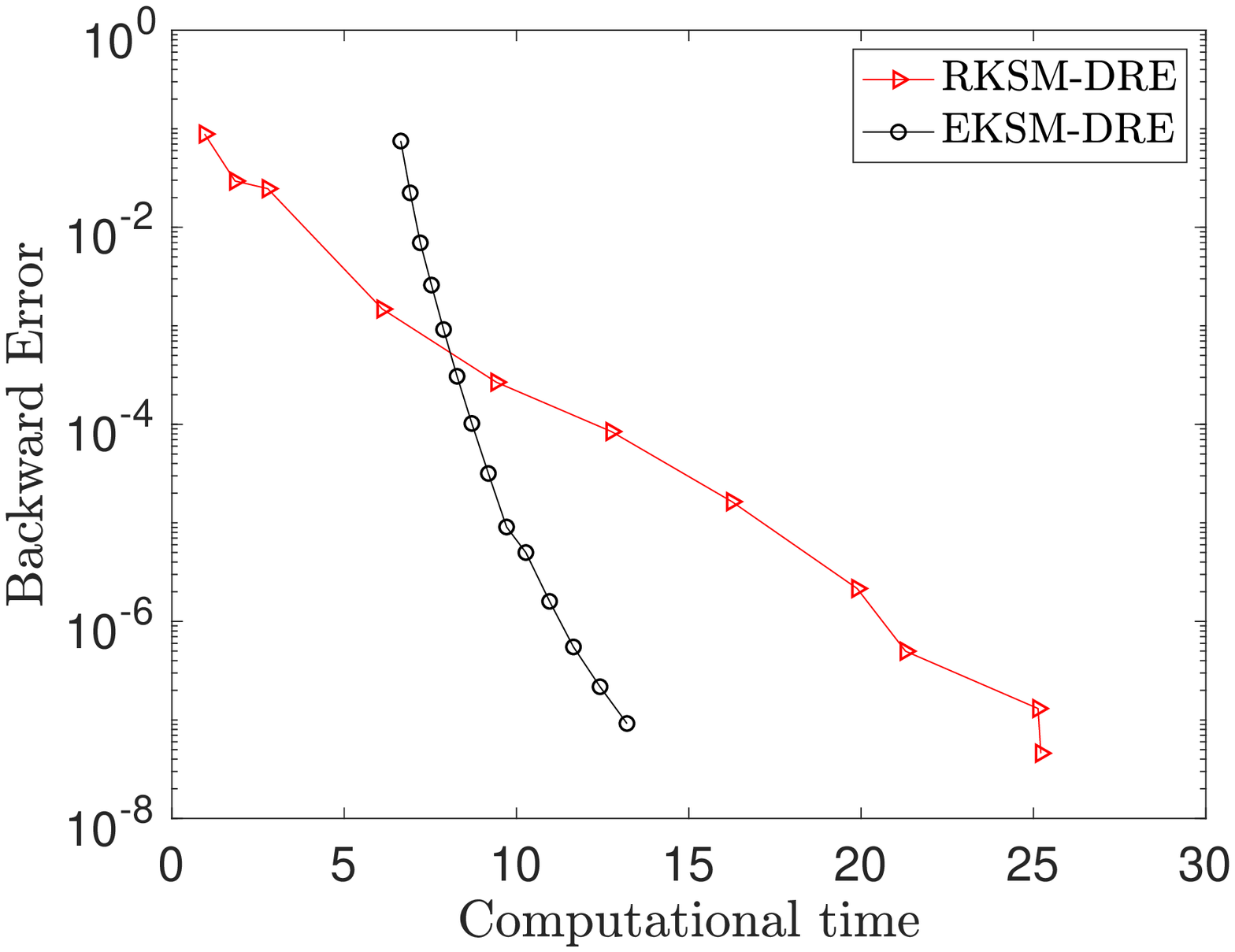}
    \caption[Examples of sparse matrices]{{\sc chip}: Convergence history for
{\sc eksm-dre} and {\sc rksm-dre}. Left: backward error versus space dimension. Right:
backward error versus computational time.}
    \label{fig:ex3}
\end{figure}

\begin{figure}[htb!]       
    \includegraphics[width=.49\textwidth]{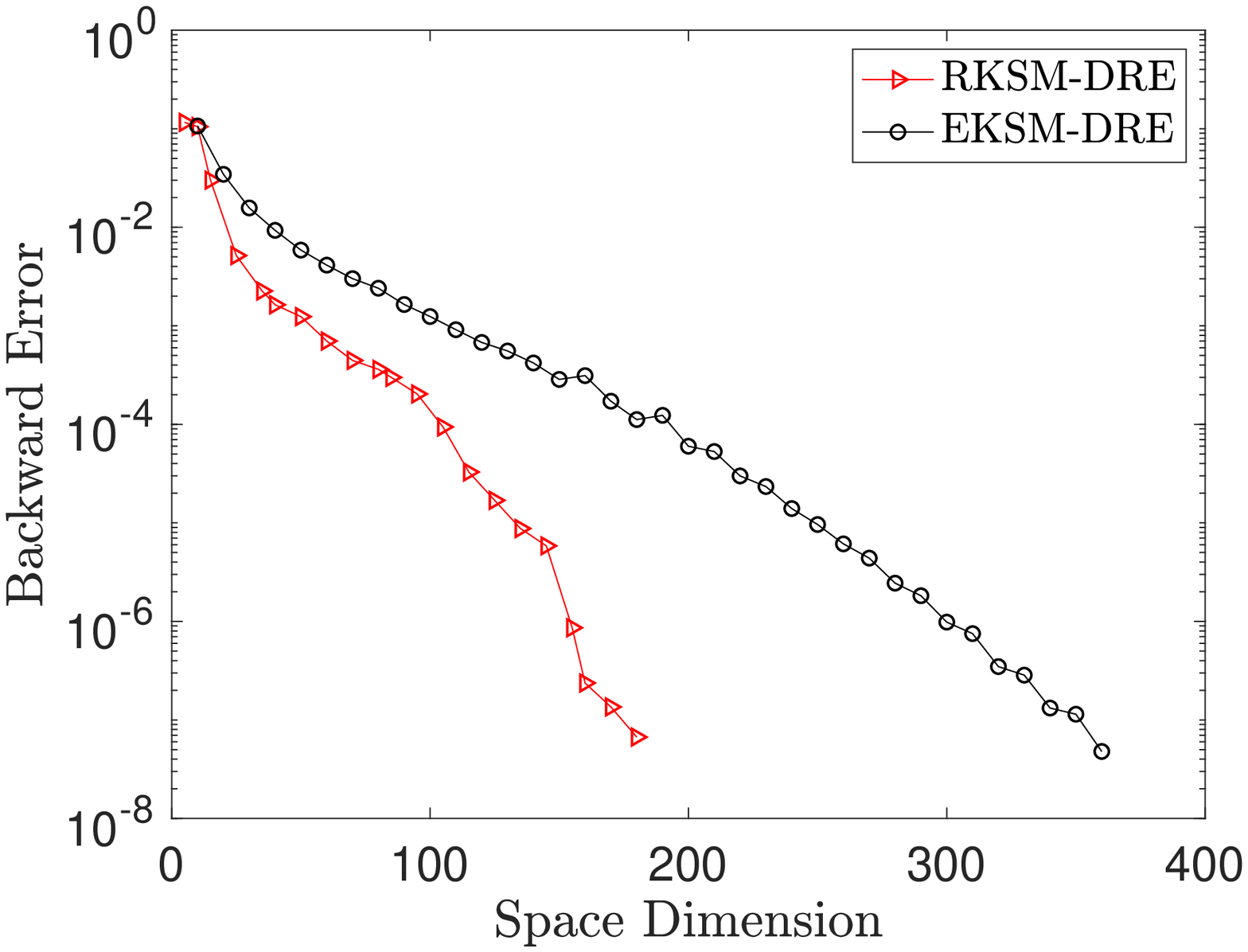}
    \hspace{0.1cm}
    \includegraphics[width=.49\textwidth]{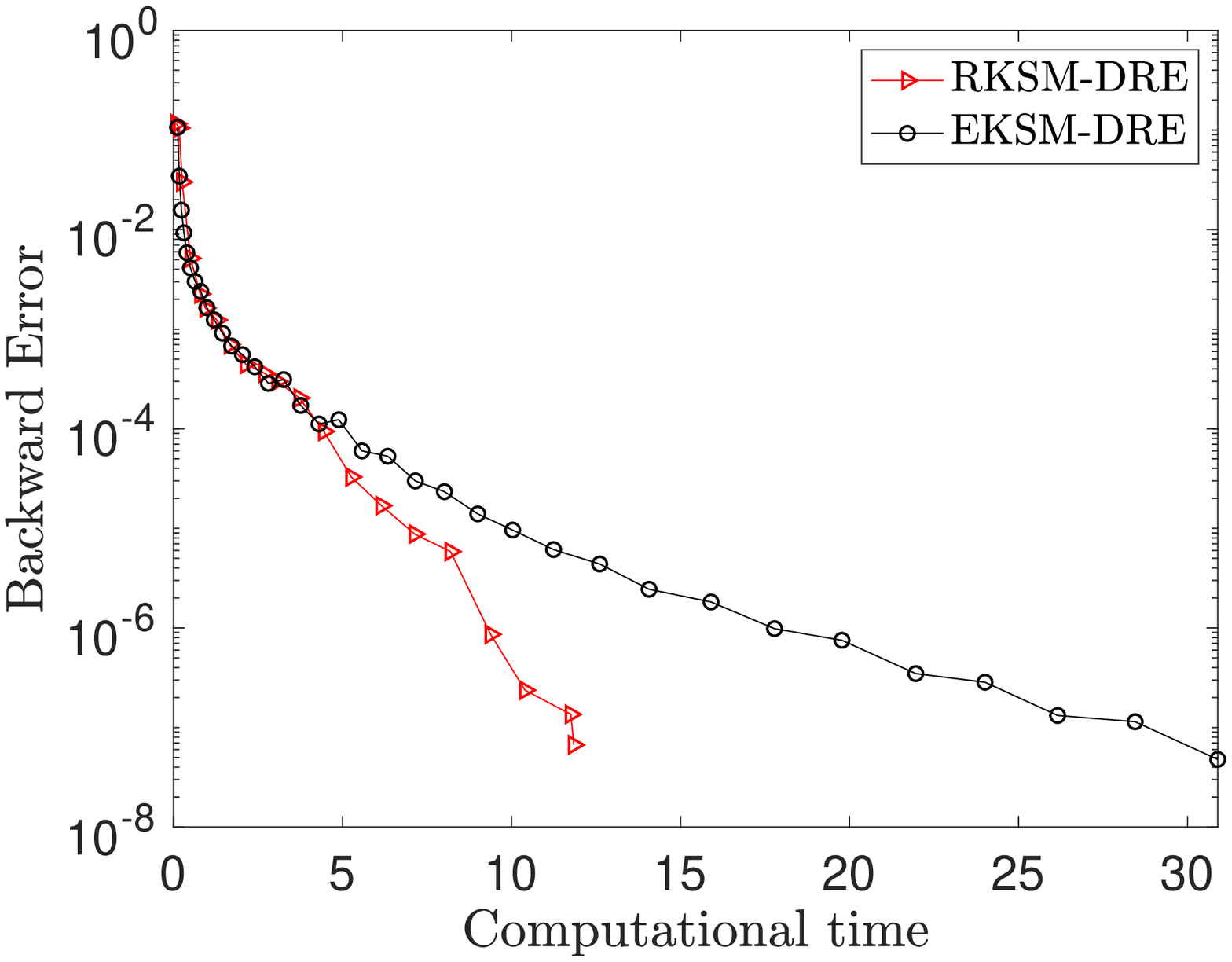}
    \caption[Examples of sparse matrices]{{\sc flow}: Convergence history for
{\sc eksm-dre} and {\sc rksm-dre}. Left: backward error versus space dimension. Right:
backward error versus computational time.}
    \label{fig:ex4}
\end{figure}

\begin{figure}[htb!]       
    \includegraphics[width=.49\textwidth]{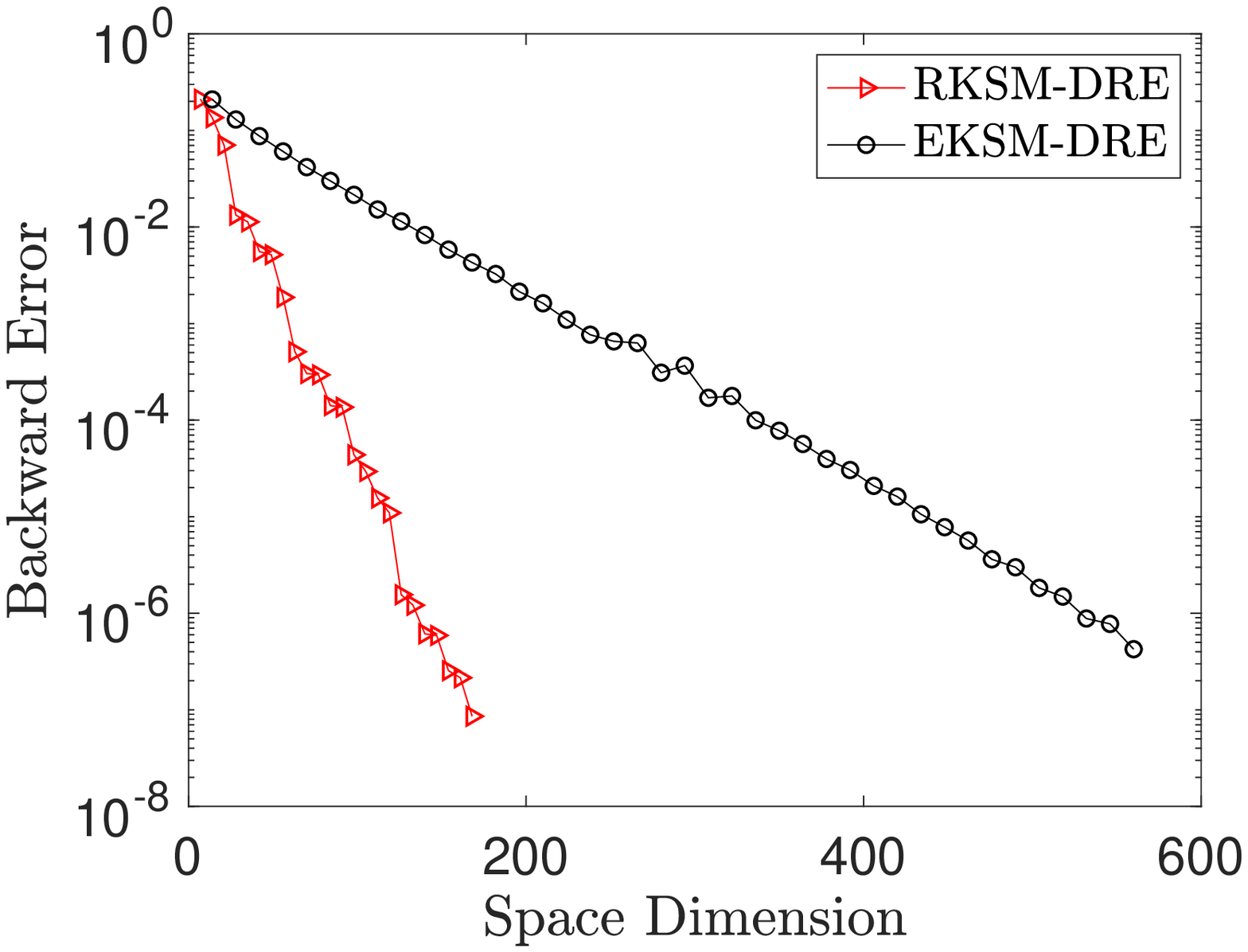}
    \hspace{0.1cm}
    \includegraphics[width=.49\textwidth]{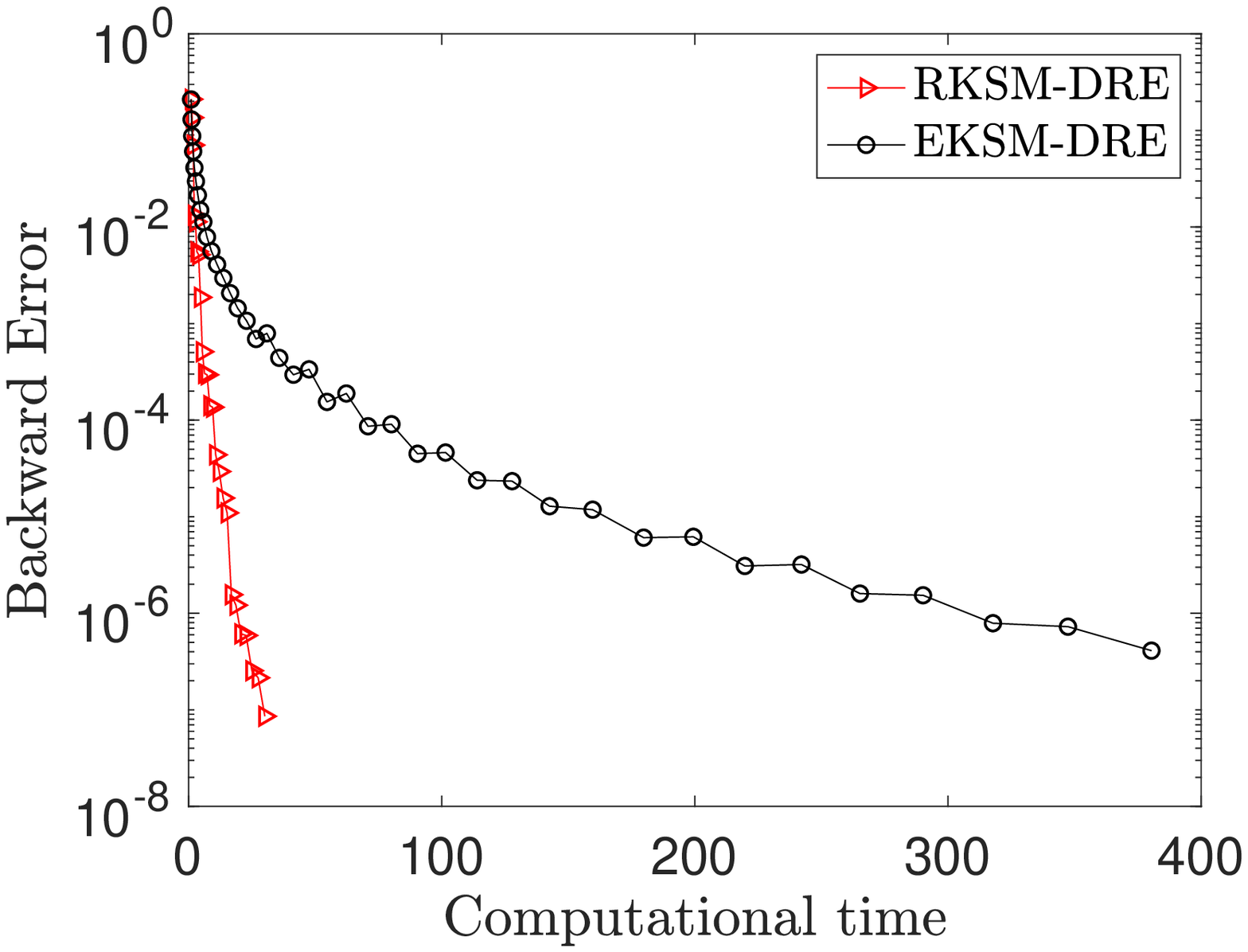}
    \caption[Examples of sparse matrices]{ {\sc rail}: Convergence history for
{\sc eksm-dre} and {\sc rksm-dre}. Left: backward error versus space dimension. Right:
backward error versus computational time.}
    \label{fig:ex5}
\end{figure}
%
For the dataset {\sc sym2d}, the large algebraic linear system  in {\sc rksm-dre} was iteratively solved by
implementing a block conjugate gradient algorithm, with an inner tolerance of $10^{-10}$, preconditioned 
with an incomplete Cholesky factorization with drop tolerance $10^{-4}$. 
For all other datasets, the MATLAB built-in backslash operator was used. 
For {\sc eksm-dre} the coefficient matrix $A$ used to generate the Krylov space
remains constant, hence a 
sparse reordered Cholesky (for {\sc sym2d} and {\sc rail}) or LU (for all other datasets) factorization was 
performed once and for all at the start of the algorithm. Therefore, only sparse triangular solves 
are required at each iteration.  Clearly, the cost of the initial factorization 
depends on the size and density of the coefficient matrix.
These two cost stages are particularly noticeable in the right plots of
 \cref{fig:ex2} and \cref{fig:ex3}, where the {\sc eksm-dre} curve starts towards the right of the plot,
while the rest of the computation throughout the iterations is significantly faster.

In the implementation of {\sc rksm-dre} it is possible to decide a priori whether to use only
real or generically complex shifts.
Our experiments showed that complex shifts were unnecessary for {\sc sym2d} and {\sc nsym3d} and, in fact, 
slowed down convergence when used. 
On the other hand, the use of general complex shifts proved to be crucial for the efficient convergence 
of {\sc rksm-dre} for {\sc chip} and {\sc flow}. For the symmetric data in {\sc rail} 
no complex shifts were used.
We mention in passing that both algorithms are implemented so that the inner solves 
of \cref{projdre} and the residual computations are performed at each iteration; for more demanding
data we would advise a user to perform these computations only periodically to save on computational time.

Comparing performance, we observe that the two algorithms have alternating
leadership in terms of computational time, 
but that {\sc rksm-dre} almost consistently requires half the space dimension of {\sc eksm-dre}. 
This is expected as the space dimension of {\sc eksm-dre} increases with twice the number of 
columns per iteration, in comparison to {\sc rksm-dre}. 
This observation is crucial at the refinement step, where it 
could be considerably more expensive to accurately integrate a DRE of dimension 
$2m(p+q)$ in comparison to a DRE with approximately half the dimension.

{To have a clearer picture of how the various steps influence the performance of the methods,
\cref{breakdowntable} depicts the overall computational time for the 
system solves, the orthogonalization steps and the integration of the 
reduced systems for each algorithm.
For {\sc eksm-dre} the CPU time required for the Cholesky and $LU$ factorizations are included in the 
solving time, but indicated in brackets as well. It is particularly 
interesting to notice the small percentage of time required by  {\sc rksm-dre} in 
comparison to  {\sc eksm-dre} for integrating the reduced system, confirming the comment made
in the previous paragraph.}
\begin{center}
\begin{table}[htb!]
\caption{ A breakdown of 
the computational time for the considered methods for the first two datasets.\label{breakdowntable}}
\centering
\begin{tabular}{|l|r|r|r|r|}
\hline
& & System & Orthogonalisation &Integration \\
Data                        & Method       &  solves (s) & steps (s) & steps (s) \\ 

\hline
\multirow{2}{*}{\sc sym2d}  & \sc rksm-dre & 6.1              & 6.9      & 0.4     \\ 
                            & \sc eksm-dre & 8.6 (2.7)        & 12.1      & 1.3     \\ \hline
\multirow{2}{*}{\sc nsym3d}  & \sc rksm-dre & 38.3              & 0.9      & 0.8     \\ 
                            & \sc eksm-dre & 48.6 (43.5)        & 1.6      & 4.0     \\ \hline

\end{tabular}
\end{table}
\end{center}

\vskip 0.1in
{\it Comparisons with other BDF based methods.} 
We compare the two projection methods {\sc rksm-dre} and {\sc eksm-dre} with 
low-rank methods that have been developed following different strategies. The package {\sc m.e.s.s.} \cite{Saak2016}, for instance, can 
solve Lyapunov and Riccati equations, and perform model reduction of 
systems in state space and structured differential algebraic form, with time-variant
and time-invariant data.
For our purposes,  the solvers in {\sc m.e.s.s.} first discretize the time interval, and
then solve the algebraic Riccati equation resulting from the ODE solver at each
time step. Therefore, the approximation strategy employed at each time iteration 
to solve the algebraic problem is completely independent, and the obtained low-rank numerical
solution needs to be stored separately. More precisely, if $\ell$ timesteps
are performed, the procedure requires solving at least $\ell$ AREs of large
dimensions, delivering the corresponding low-rank approximate solutions. Moreover,
the rank of the constant term in the ARE increases with the time step, due to the
way the ODE solver is structured, further increasing the complexity of the ARE numerical treatment.
 In our experiments with {\sc m.e.s.s.} we only requested the approximate
solution at the final stage. If the whole approximate solution matrix is requested at different time instances,
the memory requirements will grow linearly with that.
%
The overall strategy appears to be memory and computational time consuming, therefore
we considered datasets of reduced size for our comparisons, 
as displayed in \cref{data2}. The considered timespans were left unchanged.

%
%

\begin{table}[htb!]\caption{Data information for comparisons between projection-based methods and {\sc m.e.s.s.}}\label{data2}
\centering
\begin{tabular}{|l|r|r|r|r|r|r|r|}
\hline
Name            & $n$      & $p$/$s$/$q$ & $||A||_{F}$ & $||B||_{F}$ & $||C||_{F}$ & $||Z||_{F}$ & $||E||_{F}$ \\ \hline
\sc sym2d & $40000$ & 5/1/1            & $1.3 \cdot 10^{3}$  & $3.0 \cdot 10^{2}$  & $6.7 \cdot 10^{2}$  & $3.0 \cdot 10^{2}$   & $2 \cdot 10^2$  \\ 
\sc nsym3d       & $8000$ & 6/1/3            & $6.1 \cdot10^{2}$  & $7.7 \cdot 10^{1}$  & $1.9\cdot 10^{2}$  & $8.3 \cdot 10^{1}$  & $ 2.8 \cdot 10^2$  \\
\hline
Name            & $n$      & $p$/$s$/$q$ & $||\widehat{A}||_{F}$ & $||\widehat{B}||_{F}$ & $||\widehat{C}||_{F}$ & $||Z||_{F}$ & $||\widehat{E}||_{F}$ \\ \hline
\sc flow            & $9669$            & 5/1/1            & $4.5 \cdot 10^{6}$  & $2.0 \cdot 10^{4}$  & $1.2 \cdot 10^{3}$  & $0$             & $6.8 \cdot 10^{0}$        \\ 
\sc rail            & $20209$            & 7/6/1            & $4 \cdot 10^{-3}$  & $2.1 \cdot 10^{-7}$  & $6.2 \cdot 10^{0}$  & $1.9 \cdot 10^{2}$        & $2 \cdot 10^{-4}$         \\ \hline
\end{tabular}
\end{table}

Our experimental results are displayed in \crefrange{comp1}{comp5}; we remark that now also
the refinement cost is taken into account in the projection methods. 
 In all tables, the code {\sc bdf}($b,\ell$) refers to the BDF method implemented in the refinement procedure of the reduction methods and in the time discretization procedure of {\sc m.e.s.s}.

\begin{center}
\begin{table}[bht]
\caption{ {\sc sym2d}: Storage and computational time comparison of {\sc rksm-dre}, {\sc eksm-dre} and {\sc m.e.s.s.}. 
Reduction phase performed with {\sc bdf}(1,10), refinement phase with {\sc bdf}(2,100).
In {\sc m.e.s.s.} only the approximate solution at the final time is stored, with no solutions at intermediate time instances returned.
\label{comp1}}
\centering
\begin{tabular}{|l|r|r|r|r|r|}
\hline
                & \# $n$-long & Min/Max   & Reduction  & Refine   & Tot CPU  \\
Method          &  Vecs         & rank  & phase(s)   & phase(s) & time(s) \\ \hline
{\sc rksm-dre}  & 66           & 23/43   & 1.5        & 0.19         & 1.7    \\ 
{\sc eksm-dre}  & 144           & 23/43   & 1.9         & 2.8         & 4.7    \\ 
{\sc m.e.s.s.}-{\sc bdf}(1,10) &  988    & 58/75   &   &          & 319.9    \\
{\sc m.e.s.s.}-{\sc bdf}(2,100)&  1032   & 58/86   &                   &          & 4005.4   \\ 
\hline
\end{tabular}
\end{table}
\end{center}

The tables show the storage requirements in terms of $n$-length vectors, 
the minimum and maximum approximate solution rank (with a truncation 
tolerance $10^{-8}$ for the projection methods) within the set of solutions, 
the CPU time break out of projection and refinement phases
for the two projected methods, and finally the total CPU time. The stopping tolerance for all algebraic methods -- that is the two projection
methods and the Newton--Kleinmann-type method used in {\sc m.e.s.s.} to solve each ARE -- is set to $10^{-7}$. 

 In the {\sc m.e.s.s.} software the user can either select a stopping tolerance (to be used
for all solvers within the Newton--Kleinmann strategy)
or a maximum number of iterations. 
We have experimented with both cases, where the maximum number 
of iterations was detected (a-posteriori) as the 
maximum number of iterations required within {\sc m.e.s.s} to reach the tolerance of $10^{-7}$. It was 
observed that, in the majority of cases, avoiding the residual computation may, in fact, slow down 
the computational procedure. This is due to the possibility of performing several unnecessary 
iterations at some timesteps after the desired accuracy has in fact been reached. 
We therefore only report the results of the more realistic, reliable case where a 
stopping tolerance is selected beforehand. Galerkin acceleration is used to boost the performance of Newton--Kleinmann.

\begin{center}
\begin{table}[htb]
\caption{ {\sc nsym3d}: Storage and computational time comparison of {\sc rksm-dre}, {\sc eksm-dre} and {\sc m.e.s.s.}. 
Reduction phase performed with {\sc bdf}(1,10), refinement phase with {\sc bdf}(2,100).
In {\sc m.e.s.s.} only the approximate solution at the final time is stored, with no solutions at intermediate time instances returned.
\label{comp2}}
\centering
\begin{tabular}{|l|r|r|r|r|r|}
\hline
                & \# $n$-long & Min/Max & Reduction  & Refine   & Tot CPU  \\
Method          &  Vecs         & rank & phase(s)   & phase(s) & time(s) \\ \hline
{\sc rksm-dre}  & 108         & 36/66   & 4.5       & 4.6        & 9.1    \\ 
{\sc eksm-dre}  & 196          & 36/66   & 2.8       & 5.7        & 8.5    \\
{\sc m.e.s.s.}-{\sc bdf}(1,10) &   1116  & 71/90   &   &          & 431.0    \\
{\sc m.e.s.s.}-{\sc bdf}(2,100)&  1152   & 67/94   &   &          & 4965.0    \\
\hline
\end{tabular}
\end{table}
\end{center}

All numbers in the tables illustrate the large computational costs of {\sc m.e.s.s.},
as expected by the strategy ``first time-discretize, then solve'',
whereas both projection
methods require just a few seconds of CPU in most cases.

The storage requirements of both reduction methods is independent of the number of timesteps where the solution is required.}
This is due to the fact that only a few $n$-long basis vectors need to be generated and stored,
while only the reduced problem solution $Y_m(t)$ changes at the timesteps $t$.
The memory requirements of {\sc m.e.s.s.} are measured as the dimensions of the low-rank factor returned by the Newton-Kleinmann procedure, before column compression, at the final timestep. The dimension decreases significantly with the column compression. In our experiments we only stored the approximate
solution at the last time step, however memory will be correspondingly higher if 
the whole approximation matrix is required at more instances (memory will thus grow linearly with the number
of time instances to be monitored).

Between the two projection methods, we observe that the extended space yields a significantly 
larger basis than the actual
approximate solution rank it produces. This means that the approximate solution belongs to a much smaller
space than the one constructed by {\sc eksm-dre}. This is far less so with {\sc rksm-dre}. The different 
behavior confirms what has been already observed for the two methods in the ARE case \cite{Simoncini2014}.

\begin{center}
\begin{table}[htb]
\caption{ {\sc flow}: Storage and computational time comparison of {\sc rksm-dre}, {\sc eksm-dre} and {\sc m.e.s.s.}. 
Reduction phase performed with {\sc bdf}(1,10), refinement phase with {\sc bdf}(2,100).
In {\sc m.e.s.s.} only the approximate solution at the final time is stored, with no solutions at intermediate time instances returned.
\label{comp3new}}
\centering
\begin{tabular}{|l|r|r|r|r|r|}
\hline
                & \# $n$-long & Min/Max & Reduction  & Refine   & Tot CPU  \\
Method          &  Vecs         & rank & phase(s)   & phase(s) & time(s) \\ \hline
{\sc rksm-dre}  & 186            & 95/100   & 12       & 4.6        & 16.6    \\ 
{\sc eksm-dre}  & 372          & 95/100   & 30.8         & 23.7         & 54.5    \\
{\sc m.e.s.s.}-{\sc bdf}(1,10) &  1280   & 87/106   &   &          &431.7   \\
\hline
\end{tabular}
\end{table}
\end{center}

\begin{center}
\begin{table}[htb]
\caption{ {\sc rail}: Storage and computational time comparison of {\sc rksm-dre}, {\sc eksm-dre} and {\sc m.e.s.s.}. 
Reduction phase performed with {\sc bdf}(1,10), refinement phase with {\sc bdf}(2,100).
In {\sc m.e.s.s.} only the approximate solution at the final time is stored, with no solutions at intermediate time instances returned.
\label{comp5}}
\centering
\begin{tabular}{|l|r|r|r|r|r|}
\hline
                & \# $n$-long & Min/Max & Reduction  & Refine   & Tot CPU  \\
Method          &  Vecs         & rank & phase(s)   & phase(s) & time(s) \\ \hline
{\sc rksm-dre}  & 168         & 153/160   & 6.4      &   3.3      & 9.7    \\ 
{\sc eksm-dre}  & 462          & 153/160   & 39.2       & 5.7        & 44.9    \\
{\sc m.e.s.s.}-{\sc bdf}(1,10) & 6345   & 151/158   &   &          & 705.3   \\
{\sc m.e.s.s.}-{\sc bdf}(2,100)&    4023 & 124/158   &   &          & 3396.5   \\
\hline
\end{tabular}
\end{table}
\end{center}


\vskip 0.1in
{\it Comparisons with splitting methods.}
{We next compare {\sc rksm-dre} with the fourth order additive splitting method 
({\sc split-add4($\ell$)}) developed in \cite{Stillfjord2018}. The method is based on 
splitting the DRE into the linear and non-linear subproblems, for which respective closed 
form solutions exist and are explicitly approximated. The numerical solutions to the 
subproblems are then recombined to approximate the solution to the full problem, by means 
of an additive splitting scheme. The main computational effort is due to the repeated evaluation 
of matrix exponentials, which has been resolved by using a Krylov-based matrix exponential approximation. 
Similar to the issue discussed with {\sc m.e.s.s}. in the previous section, the $\ell$ (factored) solution 
matrices are independently calculated at each timestep, leading to significant memory requirements.

 To ensure that we are comparing methods with similar approximation accuracies, we generate 
reference solutions $X_{ref}(t_j)$ for the selected time instances $t_j$. 
This is done by using {\sc rksm-dre} with a
stopping tolerance of $10^{-10}$, plus a refinement process with {\sc bdf}($4,10^{4}$) from \cite{Saak2016}.
To allow for such accurate approximations, we consider slightly smaller problem dimensions for
the first two datasets, and we set $p=s=1$ and $X_0=0$.
 

{The input parameters are tailored so that the approximate solutions from different methods have relatable accuracies. 
In particular, {\sc rksm-dre} is solved with an outer stopping tolerance of 
$10^{-6}$ and with {\sc bdf}(3,1000) in the refinement process. The number of 
timesteps utilized in {\sc split-add4} is selected as $\ell = 500$. 
The expected approximation errors  relative to the reference solution, measured as
$$ 
\frac{\|X_{approx}(t) - X_{ref}(t)\|_{F} }{\|X_{ref}(t)\|_{F}},
$$
 are illustrated in \cref{fig:approxerr} (dataset {\sc sym2d} in the left plot, dataset {\sc nsym3d} in the right plot).
 \begin{figure}[htb!]       
    \includegraphics[width=.49\textwidth]{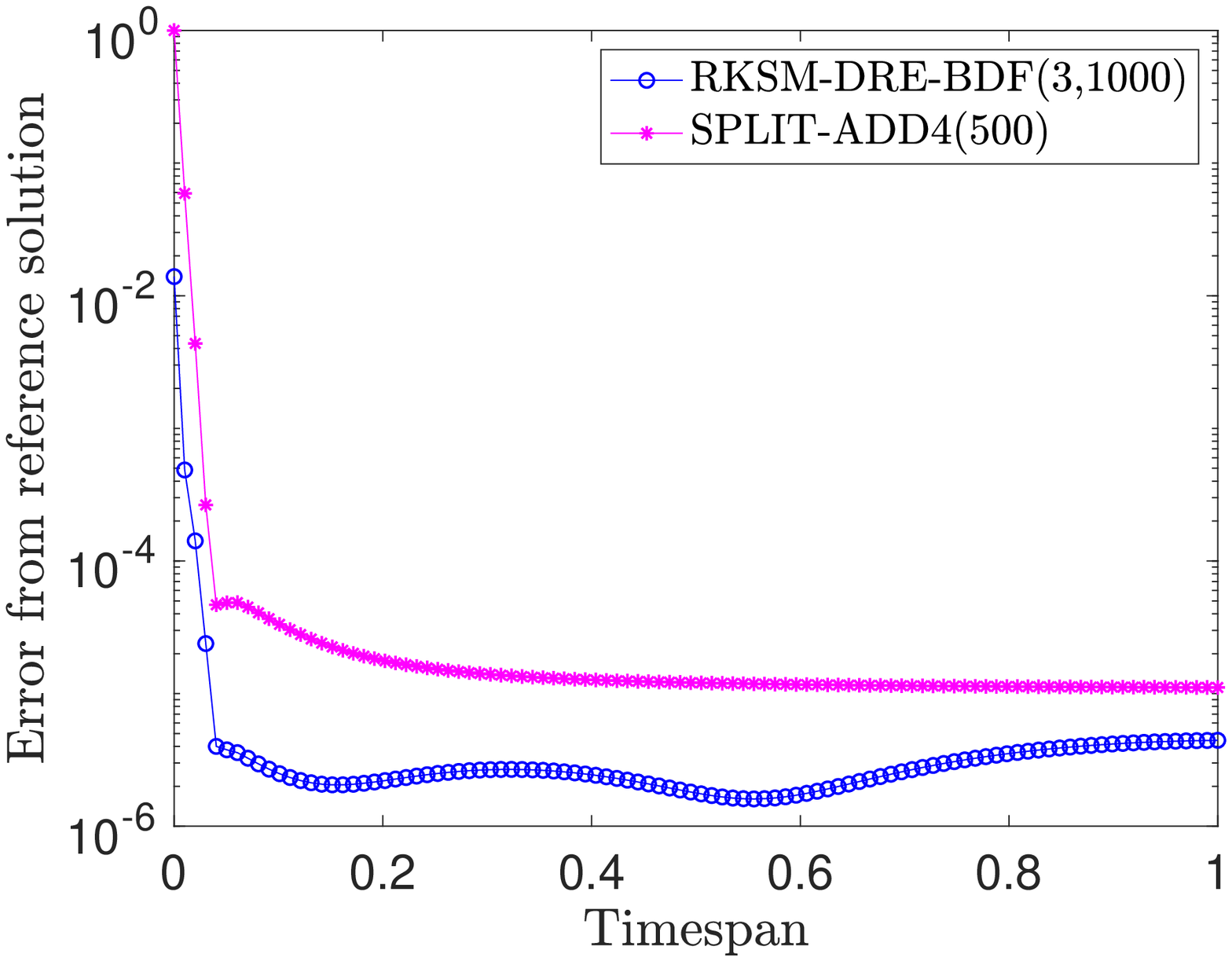}
    \hspace{0.1cm}
    \includegraphics[width=.49\textwidth]{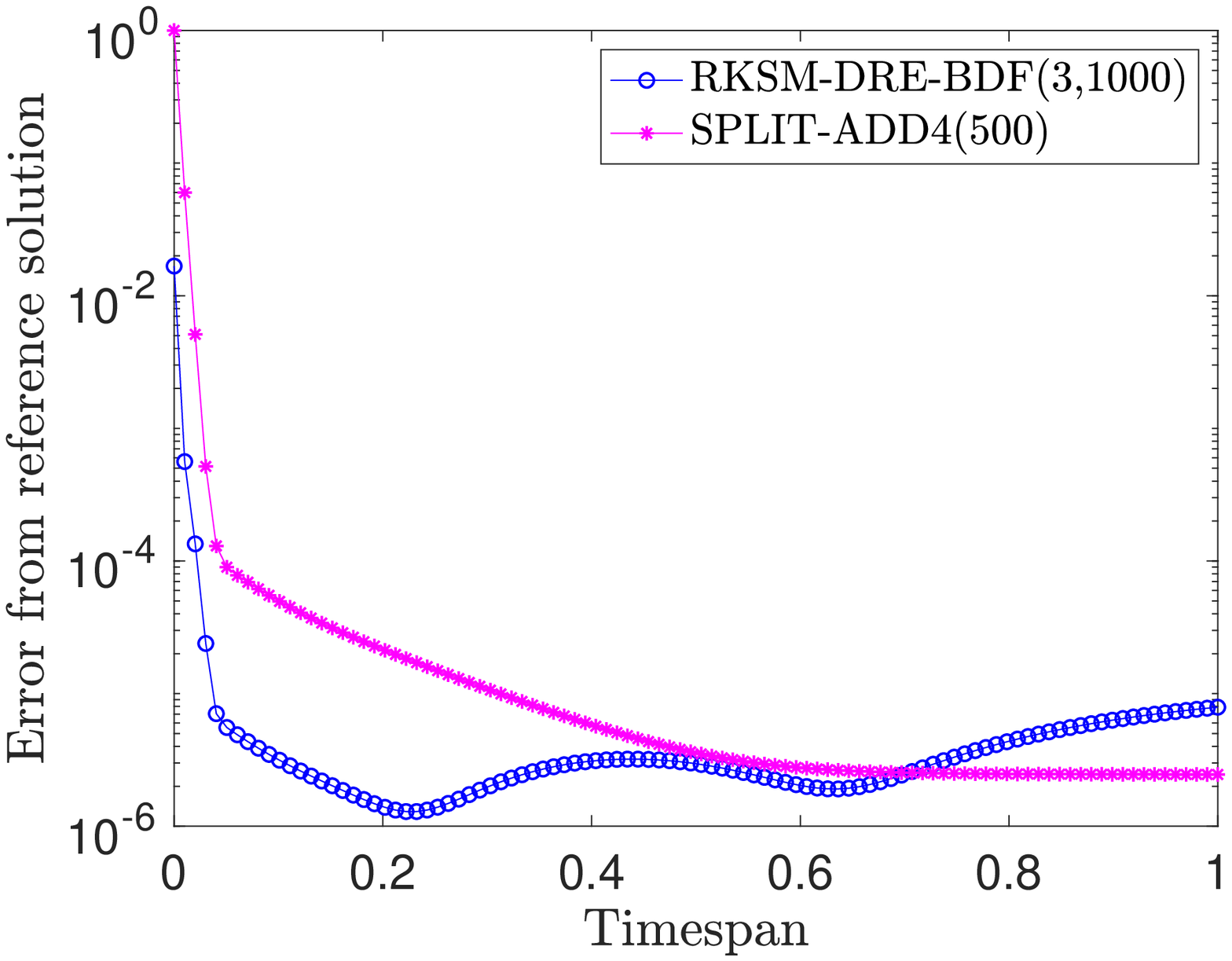}
    \caption[Examples of sparse matrices]{ Expected approximation error for {\sc rksm-dre} and 
{\sc split-add4(500)}. Left: Dataset {\sc sym2d}. Right: Sataset {\sc nsym3d}.}
    \label{fig:approxerr}
\end{figure}
The figures indicate that we compare methods having approximation errors of 
similar order. The performance results are contained in \cref{splittable} for two 
different discretizations of {\sc sym2d} and {\sc nsym3d}.}
\begin{center}
\begin{table}[htb!]
\caption{Storage and computational time comparison of {\sc rksm-dre} and {\sc split-add4(500)}. Reduction 
phase performed with {\sc bdf}(1,10), refinement phase with {\sc bdf}(3,1000). In {\sc split-add4}
only the approximate solution at the final time is stored.} \label{splittable}
\centering
\begin{tabular}{|l|r|r|r|r|}
\hline
& &\# $n$-long  & Min/Max &Tot CPU \\
Data ($n$)                       & Method       &  Vecs & rank & time (s) \\ 

\hline
\multirow{2}{*}{\sc sym2d $(10^{4})$}  & \sc rksm-dre & 8             & 3/6     & 0.6     \\ 
                            & {\sc split-add4(500)} &  28        &3/7      & 34.9     \\ \hline
                        
\multirow{2}{*}{\sc nsym3d $(8\cdot 10^{3})$}  & \sc rksm-dre & 10           & 4/7      & 2.2     \\ 
                            &{\sc split-add4(500)}& 36     & 3/9     & 37.9    \\ \hline
\multirow{2}{*}{\sc sym2d $(9 \cdot 10^{4})$}  & \sc rksm-dre & 6              & 3/4    & 1.2     \\ 
                            &{\sc split-add4(500)} &  28      & 3/7      & 330.0     \\ \hline
                         
\multirow{2}{*}{\sc nsym3d $(2.7 \cdot 10^4)$}  & \sc rksm-dre & 10           & 4/8     & 10.1     \\ 
                            & {\sc split-add4(500)} & 36      & 3/9     & 127.8    \\ \hline

\end{tabular}
\end{table}
\end{center}
All numbers indicate the competitiveness of {\sc rksm-dre} in terms of storage and computational time. The memory requirements for {\sc split-add4} is measured as the dimension of the solution factor at the final timestep, before column compression. If the solution is required at more time instances, then these memory requirements will increase accordingly.

We also mention that we have experimented with the dynamic splitting methods introduced in \cite{Mena2018},
however the algorithms proposed by the authors\footnote{We thank Chiara Piazzola for providing us with
her Matlab implementation of the method.} in \cite{Mena2018} appeared to be better suited for small to medium size problems.
\vskip 0.1in
{\it Discussion on the refinement step.}
In previous sections, we have stressed that the two approximation stages of the projection method are
independent, and we have focused on determining an effective approximation space. Here we
linger over the accuracy of the second stage, the refinement step. Exploiting the far smaller
problem size of the reduced problem, it is possible to allow for a much more accurate integration phase
than what was done during the iteration of the reduction step. This crucial fact is already illustrated
in the time break down of \crefrange{comp1}{comp3new}, where especially for {\sc rksm-dre} the refinement
phase employs a fraction of the overall computational time, while still allowing for a rather
accurate final solution.

We next explore in more detail these advantages with {\sc rksm-dre}
on {\sc sym2d}, where the discretization was further refined to get a coefficient matrix of dimension $10^6$.
The dimensions of the other corresponding matrices remain as presented in \cref{data}.  We investigate the time taken by DRE solvers with different accuracies to emphasize the advantages and 
flexibility of the refinement procedure. \cref{tensix} reports the timings for a 
refinement step performed by three different {\sc bdf} methods and
three splitting methods. The 8th order adaptive splitting method ({\sc split-adapt}8) also  comes from \cite{Stillfjord2018} and is performed with a tolerance of $10^{-7}$, i.e., the same as that of the reduction procedure. 
We emphasize that in the refinement phase we have utilized some of the most accurate integrators available,
and nevertheless the high-dimensional ($n = 10^6$) problem is approximated in 
less than 200 seconds for all integrators. 


\begin{center}
\begin{table}
\centering
\caption{{\sc sym2d} (of size $10^{6}$): Results with {\sc rksm-dre}, 
using different refinement strategies. Reduction phase performed with {\sc bdf}(1,10) and tolerance $10^{-7}$.}\label{tensix}
\begin{tabular}{|l|r|r|r|r|r|}
\hline
Refinement &  \# $n$-long & Soln.& Reduction & Refinement  & Tot CPU \\ 
Method & Vecs & rank & phase(s) & phase(s) & time(s) \\
\hline
\sc bdf(2,100)       & 72       & 55   & 37.7     & 1.1         & 38.8     \\
\sc bdf(3,1000)       & 72       & 55   & 37.7     & 9.6          & 47.3    \\
\sc bdf(4,10000)       & 72       & 55   & 37.7     & 95.5          & 133.2    \\
\sc split-add4(500)     & 72       & 55   & 37.7     & 25.9        & 63.6   \\
\sc split-add8(500)     & 72       & 55   & 37.7     & 59.6         & 97.3   \\ 
\sc split-adapt8     & 72       & 55   & 37.7     & 160.7         &198.4   \\ \hline
\end{tabular}
\end{table}
\end{center}

\section{Conclusions and open problems} \label{sec:conclusion}
We have devised a rational Krylov subspace based order reduction method for solving the symmetric differential Riccati equation,
providing a low-rank approximate solution matrix at selected time steps. 
A single projection space is generated for all time instances, and the space is 
expanded until the solution is sufficiently accurate. We stress that our approach is very general, and that it could be
applied to subspaces other than Krylov-based ones, as 
long as the spaces are nested, so that they keep growing as
the iterations proceed. This methodology could then be employed for more complex
settings, such as parameter dependent problems, where the involved approximation
space may require the inclusion of some parameter sampling.

Like in typical model order reduction strategies, in our methodology
time stepping is only performed at the reduced level, so that
the integration cost is drastically lower than what one would have by applying the time stepping
on the original large dimensional problem.
We have derived a new stopping criterion that takes into account the different approximation behavior
of the algebraic and differential portions of the problem, together with a refinement procedure
that is able to improve the final approximate solution by using a high-order integrator.
These enhancement strategies have also been applied to the extended Krylov subspace approach.
{We have analyzed the asymptotic behavior of the reduced order solution, so as to ensure that
the generated approximation behaves like the sought after time-dependent solution.

Although our numerical results are promising, there are still several open issues
associated with the reduced order solution of the DRE.
In particular, while stability and other matrix properties associated with the
solutions $X(t)$ have been thoroughly studied 
\cite{Bitmead1985,Gevers1986,Poubelle1988,Dieci.Eirola.96}, 
the analysis of corresponding properties for the approximate solution $X_m(t)={\cal V}_m Y_m(t) {\cal V}_m^T$ 
for $t \in [0, t_f]$ is still a largely open problem. In \cite{Koskela2018} some
interesting monotonicity properties have been shown when the polynomial Krylov subspace
is used together with particular ODE solvers;
a complete analysis for $X_m(t)$ in a more general setting would be desirable.


\section*{Acknowledgements} We thank Khalide Jbilou for helpful explanations on \cite{Angelova2018}, and
Jens Saak for assistance in using the software M.E.S.S. v.1.0.1 \cite{Saak2016}. We are also grateful
to Chiara Piazzola for making her code from 
\cite{Mena2018} available and to Tony Stillfjord for pointing us towards his codes from \cite{Stillfjord2018}. 
We thank Maximilian Behr for some helpful criticism on a previous version of this manuscript.
The first author acknowledges the insightful comments and kind hospitality of the group of
Peter Benner at the Max Planck Institute in Magdeburg (D), during a short visit in February 2019. Furthermore, we thank two anonymous referees for their careful reading and insightful comments.

This research is supported in part by the Alma Idea grant of
the Alma Mater Studiorum Universit\`a di Bologna, and by INdAM-GNCS under the 2018 Project 
``Metodi numerici per equazioni lineari, non lineari e matriciali con applicazioni''.
The second author is a member of INdAM-GNCS.

\appendix
\section{Krylov subspace properties} \label{matral}
In this Appendix we review some properties of extended and rational Krylov 
subspaces. As in \cref{sec:projection} we denote $N=[C^T,Z]$.
\vskip 0.1in 
{\it Extended Krylov subspace.}  The extended Krylov subspace $\mathcal{EK}_{m}(A^{T},N)$ takes the form discussed in \cref{sec:projection}.
The orthonormal basis $\mathcal{V}_{m} \in \mathbb{R}^{n \times 2m(p+q)}$ spanning the subspace is formed using the extended Arnoldi algorithm \cite{Druskin1998}. 
Let 
\begin{equation}
\widetilde{\mathcal{T}}_{m}^{T} = \mathcal{V}_{m+1}^{T}A^{T}\mathcal{V}_{m} =
\begin{bmatrix}
\mathcal{T}_{m}^{T}\\
t_{m+1,m}E_{2m}^{T}
\end{bmatrix} \in \mathbb{R}^{2(m+1)(p+q) \times 2m(p+q)},
\end{equation}
where $\mathcal{V}_{m+1}  = [\mathcal{V}_{m} \hspace{0.1cm} V_{m+1}] \in \mathbb{R}^{n \times 2(m+1)(p+q)}$ and $E_{2m}$ is the last $2(p+q)$ columns of  $I_{2m(p+q)}$. 
The extended Arnoldi algorithm produces the Arnoldi-type relation
\begin{equation}
\begin{split}
\label{extrel}
A^{T}\mathcal{V}_{m} &= \mathcal{V}_{m+1}\widetilde{\mathcal{T}}_{m}^{T} 
 = \mathcal{V}_{m}\mathcal{T}_m^{T} +  V_{m+1}t_{m+1,m}{E}_{2m}^{T}.
\end{split}
\end{equation}

\vskip 0.1in
{\it Rational Krylov subspace.}
The rational Krylov subspace was originally proposed in the eigenvalue context in \cite{Ruhe1984}.
Its use in our context is motivated by \cite{Simoncini2014} and later \cite{Simoncini2016}, where 
its effectiveness in the solution of the algebraic Riccati equation is amply discussed.

Assume that $A$ is Hurwitz.
 Given ${\pmb s} = \{s_{1}, s_{2}, \dots\}$, with $s_{j} \in \mathbb{C}^{+}$, the rational Krylov subspace is given by
${\cal RK}_{m}(A,N,{\pmb s})$ as defined in \cref{sec:projection}.
The approximation effectiveness of this subspace depends on the choice of shifts ${\pmb s}$, and this
issue has been investigated in the literature; see, e.g., \cite{Penzl2000}, \cite{Druskin2011}.
The adaptive choice of shifts was tailored to the ARE in \cite{Lin2015} by the inclusion of 
information of the term $BB^{T}$ during the shift selection; see also \cite{Simoncini2016} for
a more detailed discussion\footnote{The Matlab code of the rational Krylov subspace method for ARE is available
at {\tt http://www.dm.unibo.it/$\,\tilde {\,}$simoncin/software}}. In our numerical experiments we used this 
last adaptive strategy, where the approximate solution at timestep $t_f$ is used.

The algorithm presented in \cite{Druskin2011} forms a complex basis, when the shifts are not all 
real.  In short, when $s_{j} \in \mathbb{C}^{+}$, the original approach would be to use the shift $s_{j}$ to form the next block $V_{j}$ and to then let the following shift be given by $s_{j+1} = \overline{s}_{j}$, where $\overline{s}_{j}$ denotes the complex conjugate of $s_{j}$. This results in both $V_{j}$ and $V_{j+1}$ being complex. 
As a consequence, the reduced DRE has complex coefficient matrices, although
the final resulting approximations $X_{m}(t)$ will be real. Standard ODE solvers do not handle 
complex arithmetic well, hence we implemented an all-real basis using the method 
introduced in \cite{Ruhe1994}, which works as follows.
 If the shift $s_{j}$ is complex then the block $W_{j} = (A - s_{j}I)^{-1}V_{j-1}$ 
is also complex, hence
we split it 
into its real and complex parts, that is $W_{j} = W_{j}^{(r)} + W_{j}^{(c)} \imath$.
The block $V_j$ is then formed by orthogonalizing $W_{j}^{(r)}$ with respect to all
vectors in the already computed basis,  
 after which $V_{j+1}$ is formed by orthogonalizing $W_{j}^{(c)}$ with respect to all previous
vectors in in the computed basis, and in $V_{j}$. 
This determines the same space, since span$\{W_j, \bar W_j\} = $span$\{V_j, V_{j+1}\}$.
%
The resulting {\it real} basis of the rational Krylov subspace is given by
 $\mathcal{V}_{m} = [V_{1}, \dots, V_{m}] \in \mathbb{R}^{n \times m(p+q)}$. 
We also define the matrices  $\mathcal{V}_{m+1} = [ \mathcal{V}_{m}, V_{m+1} ] \in \mathbb{R}^{n \times (m+1)(p+q)}$ and the matrix 
\begin{equation}
\widetilde{\mathcal{H}}_{m} = 
\begin{bmatrix}
\mathcal{H}_{m}\\
r_{m+1,m}E_{m}^{T}
\end{bmatrix} \in \mathbb{R}^{(m+1)(p+q) \times m(p+q)},
\end{equation}
where $r_{m+1,m} \in \mathbb{R}^{(p+q) \times (p+q)}$ and $E_{m}$ holds the last $(p+q)$ columns of $I_{m(p+q)}$. 
The matrix $\widetilde{\mathcal{H}}_{m}$ contains the orthogonalization coefficients obtained during the rational Arnoldi algorithm.

Let $\mathcal{T}_{m}^{T} = \mathcal{V}_{m}^{T}A^{T}\mathcal{V}_{m} \in \mathbb{R}^{m(p+q) \times m(p+q)}$. 
The rational Krylov basis satisfies the Arnoldi-type relation
\begin{equation}
\label{ratrel}
A^{T}\mathcal{V}_{m} = \mathcal{V}_{m}\mathcal{T}_{m}^{T} + \widehat{V}_{m+1}G_{m}^{T},
\end{equation}
where $G_{m}^{T} = \mathbf{\gamma}r_{m+1,m}E_{m}^{T}\mathcal{H}_{m}^{-1}$ 
and the matrix $\widehat{V}_{m+1}$ is an orthonormal matrix such that 
\begin{equation}
\widehat{V}_{m+1}\mathbf{ \gamma } = V_{m+1}s_{m} - (I_{n} - \mathcal{V}_{m}\mathcal{V}_{m}^{T})A^{T}V_{m+1}
\end{equation}
is the $QR$ decomposition of the matrix on the right (see \cite{Druskin2011, Lin2013}).
The rational Krylov procedure requires as an extra input the (usually real) values $s_{0}^{(1)}, s_{0}^{(2)}$, 
which form a rough approximation of a spectral region used to compute the next shift. 
The reader is referred to \cite{Druskin2011, Simoncini2016} for implementation details. 
Further, for the computation of the term $G_{m}^{T}Y_{m}(t)$ contained in the residual computation of {\sc rksm-dre}, 
we follow an accelerated computation technique presented in \cite{Druskin2011}.

\section{Extended Krylov subspace based method}
\label{appb}
The extended Krylov ({\sc eksm-dre}) subspace method for solving \cref{dre} is presented in \cref{alg:eksmdre}. 
\begin{algorithm}
\caption{EKSM-DRE}
\label{alg:eksmdre}
\begin{algorithmic}[0]
\REQUIRE{$A \in \mathbb{R}^{n \times n}$, $B \in \mathbb{R}^{n \times s}$, $C \in \mathbb{R}^{p \times n}$, $Z \in \mathbb{R}^{n \times q}$, $tol$, $t_{f}$, $\ell$}
\STATE{(i) Perform reduced $QR$: $\left([C^{T},\, Z], \, A^{-1}[C^{T},\, Z] \right) = V_{1}\Lambda_{1}$}
\STATE{\hskip 0.2in Set $\mathcal{V}_{1} \equiv V_{1}$}
	\STATE{\hskip 0.2in {\bf for}  $ m = 2,3 \dots$}
	\STATE{\hskip 0.4in Compute the next basis block $V_{m}$} 
	\STATE{\hskip 0.4in Set $\mathcal{V}_{m} = [\mathcal{V}_{m-1}, V_{m}]$}
	\STATE{\hskip 0.4in Update $\mathcal{T}_{m}$ as in \cite{Simoncini2007} and 
$B_{m} 	= \mathcal{V}_{m}^{T}B$, $Z_{m} = \mathcal{V}_{m}^{T}Z$  and $C_{m} = C\mathcal{V}	_{m}$}
	\STATE{\hskip 0.4in Integrate \cref{projdre} from 0 to $t_f$ using BDF($1,\ell$)}
	\STATE{\hskip 0.4in Compute $\rho_m$ using \cref{cheapy} where $\tau_{m}^{T} = t_{m+1,m}E_{2m}^{T}$}	 
		\STATE{\hskip 0.4in {\bf if} $\rho_m < tol$}
		\STATE{\hskip 0.6in \textbf{go to} (ii)}
		\STATE{\hskip 0.4in {\bf end if}}
\STATE{\hskip 0.2in{\bf end for}}
\STATE{(ii) Refinement: solve \cref{projdre} with a more accurate integrator}
\STATE{Compute $Y_{m}(t_j) = \widehat{Y}_{m}(t_j)\widehat{Y}_{m}(t_j)^{T}$, $j=1, \ldots, \ell$  using the truncated SVD}
\RETURN $\mathcal{V}_{m} \in \mathbb{R}^{n \times 2m(p+q)}$ and 
$\ell$ factors $\widehat{Y}_{m}(t_j) \in \mathbb{R}^{2m(p+q) \times r}$, $j=1, \ldots, \ell$.
\end{algorithmic}
\end{algorithm}

\bibliographystyle{siam}
\bibliography{Drereferencesreal}

\end{document}